\newcommand{\R}{\mathbb{R}}
\DeclareMathOperator*{\esssup}{ess\,sup}
\DeclareMathOperator*{\supp}{supp}
\DeclareMathOperator*{\dist}{dist}
\newtheorem{theorema}{Theorem}
\newtheorem*{Theorem*}{Theorem}
\newtheorem{Lemma}{Lemma}[section]
\newtheorem{Proposition}{Proposition}[section]
\numberwithin{equation}{section}
\definecolor{darkgreen}{rgb}{0,0.5,0}
\definecolor{darkblue}{rgb}{0,0,0.7}
\definecolor{darkred}{rgb}{0.9,0.1,0.1}
\definecolor{lightblue}{rgb}{0,0.51,1}
\title[Blow-up of dynamically restricted critical norms]{Blow-up of dynamically restricted critical norms near a potential Navier-Stokes singularity}
\author[T. Barker]{Tobias Barker}
\address[T. Barker]{Department of Mathematical Sciences, University of Bath, Bath BA2 7AY, UK}
\email{tobiasbarker5@gmail.com}
\author[P. G. Fern\'andez-Dalgo]{Pedro Gabriel Fern\'andez-Dalgo}
\address[P. G. Fern\'andez-Dalgo]{Cergy Paris Universit\'e, Laboratoire de Math\'ematiques AGM, UMR CNRS 8088, France}
\email{pedro.fernandez-dalgo@cyu.fr}
\author[C. Prange]{Christophe Prange}
\address[C. Prange]{Cergy Paris Universit\'e, Laboratoire de Math\'ematiques AGM, UMR CNRS 8088, France}
\email{christophe.prange@cyu.fr}
\date{\today}
\begin{document}

\maketitle

\begin{abstract}
In this paper we develop new methods to obtain regularity criteria for the three-dimensional Navier-Stokes equations in terms of dynamically restricted endpoint critical norms: the critical Lebesgue norm in general or the critical weak Lebesgue norm in the axisymmetric case. This type of results is inspired in particular by a work of Neustupa (2014), which handles certain non endpoint critical norms. Our work enables to have a better understanding of the nonlocal effect of the pressure on the regularity of the solutions.
\end{abstract}

\vspace{1cm}
\noindent{\bf Keywords: } Navier-Stokes equations, local regularity, critical norms, dynamically restricted quantities, potential singularities, 
 time weighted energy estimates, axisymmetric solutions\\ 
 
\noindent{\bf AMS classification: }  35A99, 35B44, 35B65, 35Q30, 76D05.

\section{Introduction}
\label{intro}

One currently active research direction on the three-dimensional Navier-Stokes equations
\begin{equation}
\label{ns1intro}
    \partial_t v - \Delta v+ v \cdot \nabla v +\nabla p=0, \phantom{space} \nabla \cdot v = 0.
\end{equation}
is the investigation of scales that play a role in potential singularity formation. Many results were proved in the past ten years that: (i) show dynamically restricted regularity criteria and on the other side of the coin (ii) show that certain scale-invariant quantities accumulate/concentrate on specific scales near a potential singularity. Let us cite (non exhaustively) the following works: \cite{CS14,BG17,ChesDai19,Luo19} on frequency localized results, \cite{LOW18,GrujicXu2019-dynrestr,BP20norm,KangMiuraTsai20-concL2,bradshaw2022local} on spatially localized results. For a more extensive bibliography, we refer to the recent survey \cite{BP22survey}. 

In this vein, let us mention in particular the results of Ne\v{c}as and Neustupa \cite{NN02} and of Neustupa \cite{Neus12,Neus14}. The papers \cite{NN02,Neus12} lead to \cite{Neus14}, where the author proves a dynamically restricted version of the Lady\v{z}enskaja-Prodi-Serrin regularity criteria, namely if $v$ is a finite-energy weak solution to \eqref{ns1intro} in $\R^3\times(-1,0)$ such that $0$ is a first-time singularity and such that $(0,0)$ is a singular point,\footnote{By definition, the point $(0,0)$ is a \emph{singular point} if for all $r\in(0,1)$, $v\notin L^\infty(Q_{(0,0)}(r))$, where $Q_{(0,0)}(r)$ is the parabolic cylinder $B_0(r)\times(-r^2,0)$. The point $(0,0)$ is called a \emph{regular point} if it is not a singular point.} then 
\begin{equation*}
\int\limits_{-1}^0\Bigg(\int\limits_{B_{0}(\sqrt{a})\setminus B_{0}(\sqrt{-at})}|v|^p\, dx\Bigg)^\frac{q}{p}\, ds=\infty
\end{equation*}
for critical Lebesgue exponents $\frac2q+\frac3p=1$, $3\leq q<\infty$, $3<p\leq 9$ and a certain $a>0$ (see Footnote \ref{foot.a}). In this paper, we address the endpoint $p=3$ and $q=\infty$. Hence, the result stated below generalizes in particular the celebrated result of Escauriaza, Seregin and \v{S}ver\'{a}k \cite{ESS2003}. In the case that $v$ is axisymmetric we obtain strengthened results, which generalize the results of \cite{chen2008lower,chen2009lower,koch2009liouville,sersve2009type}. For an illustration of Neustupa's result and our theorem below, we refer to Figure \ref{fig.paraboloid}.

\begin{theorema}\label{theom.main}
Let $v$ be a finite-energy weak solution to the Navier-Stokes equations \eqref{ns1intro} in $\R^3\times(-1,0)$ such that $0$ is a first-time singularity.\footnote{In particular, $v\in C^\infty(-1,T;C^\infty(\R^3))$ for all $T\in(-1,0)$. We introduce this assumption in order to remove certain technicalities. Notice that the framework of first-time singularities is relevant for the study of the global regularity problem for the 3D Navier-Stokes equations.} Assume $(0,0)$ is a singular point.\\
Then 
\begin{equation}\label{e.blowupL^3}
\limsup_{t\rightarrow 0^+}\|v(\cdot,t)\|_{L^3(B_0(\sqrt{a})\setminus B_0(\sqrt{-at}))}=\infty.
\end{equation}
If in addition, the solution is axisymmetric, then
\begin{equation}\label{e.blowupL^3infty}
\limsup_{t\rightarrow 0^+}\|v(\cdot,t)\|_{L^{3,\infty}(B_0(\sqrt{a})\setminus B_0(\sqrt{-at}))}=\infty.
\end{equation}
Here, and throughout the paper, $a:=\lambda_S(B_0(1))>\pi$, where $\lambda_S(B_1)$ is the first eigenvalue of the Dirichlet-Stokes operator on $B_0 (1)$.\footnote{\label{foot.a}From our estimates, we can allow $a\in\big(0,\frac43\lambda_S(B_0(1))\big)$. We choose $a:=\lambda_S(B_0(1))$ in order to fix the ideas.}
\end{theorema}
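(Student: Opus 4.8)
The plan is to argue by contradiction: assume that $(0,0)$ is a singular point but that the dynamically restricted critical norm stays bounded, i.e.\ there exists $M<\infty$ with $\|v(\cdot,t)\|_{L^3(B_0(\sqrt{a})\setminus B_0(\sqrt{-at}))}\le M$ for all $t\in(-1,0)$ close to $0$ (respectively the $L^{3,\infty}$ norm in the axisymmetric case). The guiding idea is that the paraboloid-shaped annular region $B_0(\sqrt a)\setminus B_0(\sqrt{-at})$ is exactly the region swept out, under the natural parabolic rescaling $v_\lambda(y,s)=\lambda v(\lambda y,\lambda^2 s)$ around time $0$, by a \emph{fixed} ball; the constant $a=\lambda_S(B_0(1))$ is chosen so that the backward heat/Stokes evolution from this ball, with Dirichlet data, has a favourable spectral gap. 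So the first step is to perform the zoom: set $t=-\lambda^2$, rescale, and translate the hypothesis into a uniform bound on $\|v_\lambda(\cdot,-1)\|_{L^3(A)}$ (or $L^{3,\infty}$) on a fixed annulus $A$, uniformly as $\lambda\to0^+$.

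Next I would set up a localized energy inequality on the shrinking paraboloidal neighbourhood. The key point — and this is where the value of $a$ enters — is to use a time-weighted energy estimate (of the type announced in the keywords) against the first Dirichlet--Stokes eigenfunction on $B_0(1)$: testing the local energy inequality with a weight built from $e^{\lambda_S(B_0(1))\,t}$ times a cutoff supported in the moving region $B_0(\sqrt{-at})\subset B_0(\sqrt a)$ produces, after integration by parts, a coercive term $(\lambda_S(B_0(1))-\text{(Poincaré constant of the Stokes operator)})\int|v|^2$ that has a good sign precisely because $a<\tfrac43\lambda_S(B_0(1))$ (cf.\ Footnote~\ref{foot.a}), with room to spare. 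The remaining terms are the nonlinear term $\int v\cdot\nabla v\cdot(\text{test})$ and the pressure term; the pressure is the genuinely nonlocal object, and controlling it on the moving domain — decomposing $p$ into a local Calderón--Zygmund part driven by $v\otimes v$ on a slightly larger region and a harmonic correction, and absorbing the tails — is what forces the geometry of the excluded inner ball $B_0(\sqrt{-at})$ and is, I expect, the main obstacle.

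From the weighted energy estimate one should extract that, under the boundedness assumption, a scale-critical quantity (a weighted $L^\infty_tL^2_x\cap L^2_t\dot H^1_x$ energy, or equivalently the rescaled local energy $E(\lambda)=\lambda^{-1}\int_{Q(\lambda)}|v|^2$) does not concentrate: $E(\lambda)\to0$ as $\lambda\to0^+$, or at least stays below the $\varepsilon$-regularity threshold of Caffarelli--Kohn--Nirenberg / Ladyzhenskaja--Seregin. Here one uses that the $L^3$ (resp.\ $L^{3,\infty}$) bound on the annulus, combined with interpolation and the energy estimate, rules out the logarithmic or algebraic concentration that a true Type~I or Type~II singularity would require; in the endpoint $L^3$ case this is the analogue of the Escauriaza--Seregin--Šverák non-concentration of the $L^3$ norm, now localized to the dynamically restricted region, and the argument should either invoke a backward-uniqueness/unique-continuation input or a direct compactness (blow-up limit is a bounded ancient mild solution which must vanish). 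In the axisymmetric case, the weaker $L^{3,\infty}$ hypothesis suffices because the additional structure (the bound $|v|\lesssim 1/r$ near the axis, and Liouville theorems for axisymmetric ancient solutions of \cite{koch2009liouville,sersve2009type}) closes the gap left by the non-reflexivity of $L^{3,\infty}$.

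Finally, once $E(\lambda)\to0$, the $\varepsilon$-regularity criterion gives $v\in L^\infty$ on some $Q_{(0,0)}(r)$, contradicting the assumption that $(0,0)$ is a singular point; this proves \eqref{e.blowupL^3} and, with the axisymmetric refinements, \eqref{e.blowupL^3infty}. I anticipate that beyond the pressure estimate, the subtle points will be: making the time-weighted test function compatible with the finite-energy weak solution class (justifying the integrations by parts, which is why the smoothness footnote is invoked), and tracking constants carefully enough that the spectral gap really does beat the nonlinear and pressure contributions on the moving domain for the specific choice $a=\lambda_S(B_0(1))$.
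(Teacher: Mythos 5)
Your overall skeleton (contraposition, time-weighted local energy estimates, the Dirichlet--Stokes eigenvalue entering through a Poincar\'e inequality on the shrinking ball) matches the paper's strategy, but there are two genuine gaps at exactly the places where the paper's proof lives. First, the pressure. You propose a cutoff ``supported in the moving region $B_0(\sqrt{-at})$'' and defer the nonlocal pressure estimate as ``the main obstacle''; but with a cutoff whose transition region sits on the boundary of the paraboloid, the term $\int p\, v\cdot\nabla\Psi^2$ forces you to estimate the part of $p$ generated by $v\otimes v$ \emph{inside} the paraboloid, where by construction you have no critical information at all, and no Calder\'on--Zygmund/harmonic-correction decomposition removes that contribution. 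The paper's key device is different: the cutoff $\Psi_N$ equals $1$ on $B_0(N\theta_a(t))$ with $\nabla\Psi_N$ supported in $B_0((N+1)\theta_a(t))\setminus B_0(N\theta_a(t))$ for $N\gg1$ depending on $M$, so that in the pressure term the nonlocal contribution of the interior of the paraboloid is seen only through the kernel at distance $\gtrsim N\theta_a$, producing a prefactor $C_*M/N$ in front of the interior kinetic energy (see \eqref{e.definitionC*}) which is then absorbed by the damping term $\frac{a\gamma}{2}\int\theta_a^{\gamma-2}\|\Psi_N v\|_2^2$ and the Stokes--Poincar\'e gain, under the smallness condition \eqref{e.choiceN}. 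Without this (or an equivalent mechanism) your energy inequality does not close, and this is precisely the point your proposal leaves open. A secondary technical omission in the same step: to use $\lambda_S(B_0(1))$ you must correct $\Psi v$ by a Bogovskii-type divergence inverse, since $\Psi v$ is not divergence free.

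Second, the endgame. You expect the boundedness assumption to yield \emph{smallness} of a scale-critical energy ($E(\lambda)\to0$, or below the CKN threshold) and then $\varepsilon$-regularity. An endpoint critical bound gives no such smallness --- this is exactly why Escauriaza--Seregin--\v{S}ver\'ak needed backward uniqueness --- and the paper explicitly does not rely on $\varepsilon$-regularity. What the weighted estimates actually deliver is only \emph{boundedness} of scale-invariant quantities (Proposition \ref{prop.scaleinv}, then the Morrey-type bounds \eqref{key4} and \eqref{hyppr}). The conclusion is then reached by different machinery: in the general $L^3$ case, a pigeonhole in time produces $t_k\uparrow0$ with $\sup_k\|v(\cdot,t_k)\|_{L^3(B_0(\sqrt a))}<\infty$ and one invokes the local ESS-type theorem of Albritton--Barker; in the axisymmetric $L^{3,\infty}$ case, \eqref{key4} is upgraded via Seregin's result to the Type I bound \eqref{e.typeI}, and regularity follows from the Type I theory for axisymmetric solutions. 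Your alternative suggestion of a direct compactness/blow-up argument is also ruled out in this setting, as the paper notes: the hypothesis is not globally scale invariant, so persistence-of-singularity arguments for a limiting ancient solution are unavailable.
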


\begin{figure}[h]
\begin{center}
\includegraphics[scale=.6]{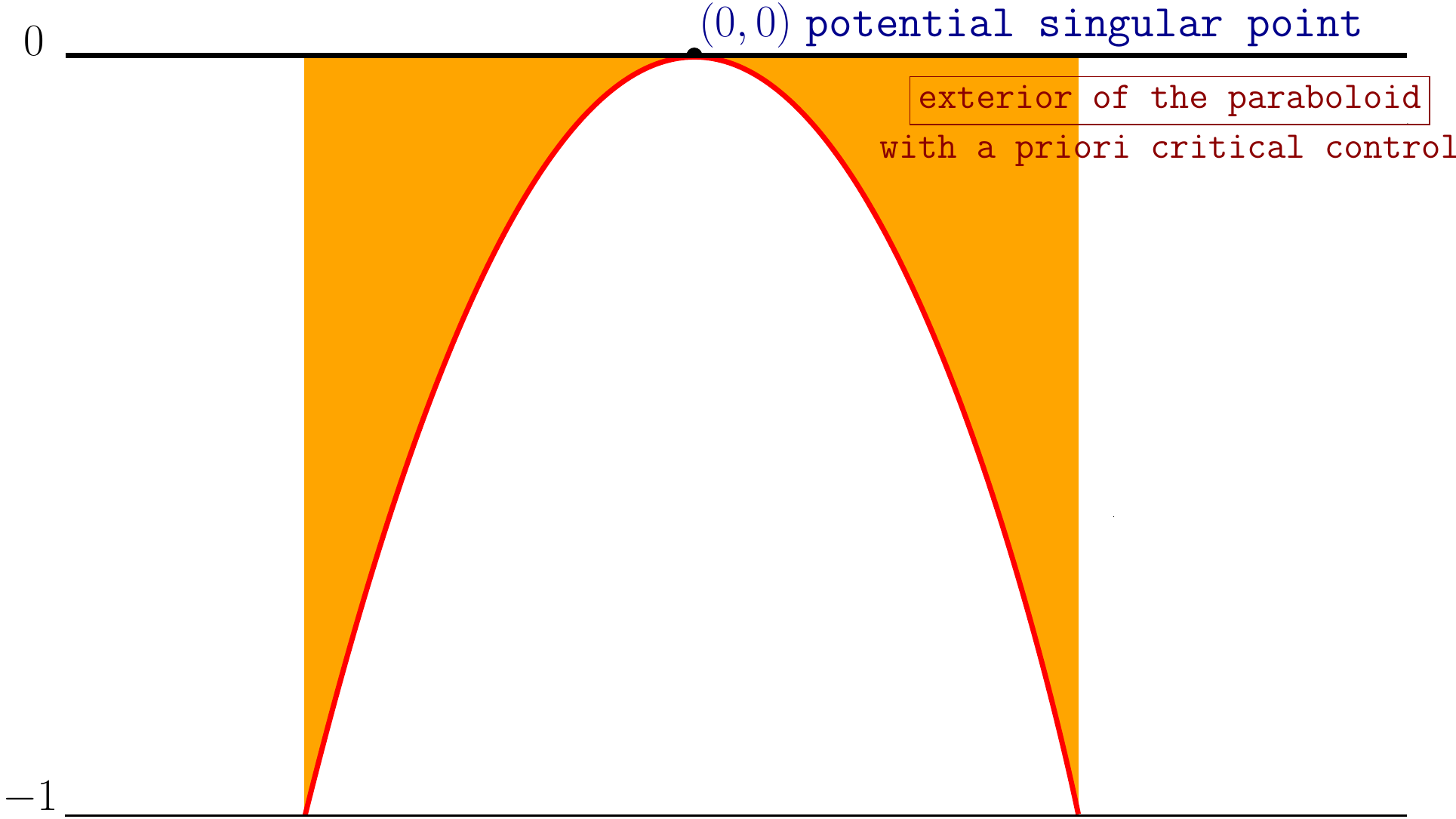}
\caption{Illustration for Theorem \ref{theom.main}}
\label{fig.paraboloid}
\end{center}
\end{figure}

Let us remark the following three facts:
\begin{enumerate}[label=(\arabic*)]
\item Theorem \ref{theom.main} is stated in the global setting for first-time singularities of Leray-Hopf solutions. In view of the global regularity problem for the three-dimensional Navier-Stokes equations this is an appropriate setting. That said, Theorem \ref{theom.main} can be localized i.e. proved for suitable solutions as is the result of Neustupa \cite[Theorem 1]{Neus14}. Localizing requires minor technical changes in the proof.
\item We believe that Theorem \ref{theom.main} can be quantified using the strategy developed in the works \cite{Tao19,BP21cmp,TB22,OP22} and further explained in the recent survey paper \cite{BP22survey}. The scale-invariant controls obtained in Section \ref{mainproofs} play a key role for this quantification as do the scale-invariant control $L^\infty_tL^3_x$ in \cite{Tao19,TB22} or the Type I assumption in \cite{BP21cmp}.
\item After finishing the paper, we were made aware of the work \cite{lei2011liouville} dealing with similar hollowed critical conditions in the axisymmetric case. There, De Giorgi type energy estimates and Moser type iteration are directly used for the equation for the swirl, which is pressureless, in order to deduce H\"older continuity.
\end{enumerate}

\subsection{Heuristics for the proof}

The proof is by contraposition, assuming that one of the quantities in \eqref{e.blowupL^3} or \eqref{e.blowupL^3infty} is bounded. To fix the ideas, let us assume that 
\begin{equation}\label{e.prooftheommainL3}
\esssup_{s\in(-1,0)}\|v(\cdot,s)\|_{L^{3}(B_0(\sqrt{a})\setminus B_0(\sqrt{a(-s)}))}\leq M,
\end{equation}
for some $M\geq 1$. 

\subsubsection*{Failure of a direct strategy} 
It seems not possible to directly prove that \eqref{e.prooftheommainL3} implies the re\-gularity by adapting the compactness argument of \cite{ESS2003}. Indeed, the absence of a global scale-invariant assumption prevents us from relying on persistence of singularity type results for obtaining a non-zero solution from a limiting procedure. It turns out that proving a scale-invariant bound inside the parabolic is the cornerstone of our strategy.

\subsubsection*{Key difficulty} 
We work in physical space variables rather than renormalized variables of Giga and Kohn type as is done by Ne\v{c}as and Neustupa \cite{NN02} and Neustupa \cite{Neus12,Neus14}. Our study is based on time-weighted scale-invariant local energy controls. The key point that enables results such as \eqref{e.blowupL^3} or \eqref{e.blowupL^3infty} to hold is the divergence-form structure of the non linear term $v\cdot\nabla v=\nabla\cdot(v\otimes v)$ in the Navier-Stokes equations. If the pressure was not there, it would be easy to obtain a scale-invariant control inside the paraboloid
\begin{equation}\label{e.paraboloid}
\bigcup_{s\in(-1,0)}B_0(\theta_a(s)),
\end{equation}
because the variation of the local energy localized on the parabolid 
only involves a priori control outside the paraboloid, for instance in the region 
$$
\bigcup_{s\in(-1,0)}B_0(2\theta_a(s))\setminus B_0(\theta_a(s)).
$$
The major difficulty we have to face is the pressure term in the local energy inequality
$$
\int\limits_{-1}^0\int\limits_{\R^3}pv\cdot\nabla\Psi^2\, dxd\tau.
$$
Indeed, because of the nonlocality of the pressure, bounding this term involves estimates of the velocity inside the paraboloid.

\subsubsection*{Key new idea} 
 Our idea to circumvent the problem of the pressure is to introduce a test function $\Psi_N$ that is supported in 
$$
\bigcup_{s\in(-1,0)}B_0((N+1)\theta_a(s))\setminus B_0(N\theta_a(s)),
$$
for $N\gg 1$ i.e. far away from the region \eqref{e.paraboloid} where we lack critical information. Taking $N$ large (in terms of $M$, see \eqref{e.prooftheommainL3}) enables us to show that the contribution of the problematic pressure term is negligible compared to an appropriate localized energy of $v$. This scheme is totally different from what is done in \cite{Neus14}. There, the fact that non-endpoint critical controls are considered buys some smallness that allows a control of the pressure term. Note that in \cite{Neus14} Neustupa eventually applies $\varepsilon$-regularity to conclude the regularity. Contrary to this, we do not have such smallness in the endpoint case and do not rely on $\varepsilon$-regularity, see Section \ref{mainproofs}.

\subsubsection*{Two technical points}

First, the constraint on $a$ in Theorem \ref{theom.main} (see also Footnote \ref{foot.a}) comes from the condition \eqref{e.choiceN} that is needed in order to get a damping term in the local energy inequality.

Second, as mentioned above, we work with scale-invariant weighted local energy quantities. The weights, see \eqref{ee2} below involve a parameter $\gamma$. In order for the local energy estimate \eqref{ee2} to be well-defined, we first need to take $\gamma>0$. This enables us, see Section \ref{subsec.scaleinv}, to prove that a certain scale-invariant kinetic energy is bounded (quantity $f$ in \eqref{deff}), which in turn enables us to make sense of \eqref{ee2} for $\gamma>-1$ and in particular for $\gamma=0$. This parameter is used in the final part of the paper, Section \ref{mainproofs}.

\subsection{Outline of the paper}

Section \ref{pressuresection} is concerned with scale-invariant weighted local energy estimates. The focus is on the control of the pressure term. Section \ref{sec.scaleinv} demonstrates the boundedness of a scale-invariant kinetic energy localized on the paraboloid \eqref{e.paraboloid}. The final part of the paper, Section \ref{mainproofs}, is devoted to the proof of Theorem \ref{theom.main}. Appendix \ref{sec.lorentz} surveys some results about Lorentz spaces.

\subsection{Notations}
\label{notations}
For $a>0$, we define $\theta_a (t) = \sqrt{a (- t)}$. Notice that throughout the paper $a:=\lambda_S(B_0(1))$ as stated in Theorem \ref{theom.main}. 

Throughout the paper $C$ is assumed to be a positive universal constant which may change from line to line. In particular $C$, does not depend on the parameters $a$, $\gamma$ or $N$ (see Section \ref{pressuresection}). Note that the inequality $\ldots\leq C(\ldots)$ is sometimes denoted by $\ldots\lesssim\ldots$. The notation $C_{b_1,\ldots\, b_k}$ denotes a positive constant depending on the parameters $b_1,\ldots\, b_k$. Note that the inequality $\ldots\leq C_{b_1,\ldots\, b_k}(\ldots)$ is sometimes denoted by $\ldots\lesssim_{b_1,\ldots\, b_k}\ldots$.

\section{Weighted energy and pressure estimates}
\label{pressuresection}

Let $M\geq 1$. In this section we assume
\begin{equation}
\label{leqm}
    \esssup_{ s \in (  - 1 , 0) }  \|v(\cdot,s)\|_{L^{3,\infty}(B_0(\sqrt{a})\setminus B_0(\theta_a(s)))}\leq M.
\end{equation}
For basic facts about weak Lebesgue and Lorentz spaces we refer to Appendix \ref{sec.lorentz}. We mainly rely on the H\"older inequality for Lorentz spaces in the estimates below, see Proposition \ref{hunt}. 
Our objective is to get scale-invariant controls for the velocity inside the region \eqref{e.paraboloid} 
via the local energy inequality and the use of the critical control \eqref{leqm}. The final scale-invariant control, see Proposition \ref{prop.scaleinv}, is obtained in Section \ref{sec.scaleinv}. Our focus in the present section is on an appropriate choice of test functions for the local energy inequality and on the estimates for the pressure.

Let\footnote{Later on, $N$ will be taken large depending in particular on $M$, see \eqref{e.choiceN}.} $N\geq4$ and $\gamma>0$.\footnote{The restriction $\gamma>0$ is needed for the validity of \eqref{ee1} for all $t\in(s,0]$. We will subsequently show, see \eqref{hypforene}, that the second term in the left hand side of \eqref{ee1} also makes sense for $\gamma>-1$. This fact will be used in Section \ref{mainproofs}.} 
Let $\varphi_N\in C^\infty_c(\R^3)$ with $\varphi_N$ positive, $\varphi_N(x)=1$ on $B_0(N)$, $\supp\varphi_N\subset B_0 (N+1)$ and $\|\nabla\varphi_N\|_{L^\infty}\lesssim 1$. We then test the Navier-Stokes equations \eqref{ns1intro} with 
\begin{equation}
    \label{testf}
\theta_a^\gamma(t) \Psi_N ^2 (x,t), \qquad t < 0,
\end{equation}
where 
\begin{equation}\label{e.eqPsi}
\Psi_N (x,t)  := \varphi_N \Big( \frac{x}{\theta_a(t)} \Big),
\end{equation}
and integrate over $\R^3\times(s,t)$ with $t\in(s,0]$.\footnote{For $t=0$, we integrate over $\R^3\times(s,\delta)$ and let $\delta\rightarrow 0^-$.}
This yields 
for all $ s\in(-1,0)$ and for all $t \in ( s, 0]$,
\begin{align}
\label{ee1}
    &\theta_a(t)^\gamma \| \Psi_N v ( \cdot , t ) \|_2 ^{2} + \frac{a \gamma }{2}  \int\limits_{s}^{t}  \theta_a(\tau)^{ \gamma - 2 }  \| \Psi_N v \|_2 ^{2} \, d\tau + 2 \int\limits_{s}^{t} \theta_a( \tau )^{ \gamma }  \| \Psi_N  \nabla  v \|_2^2 \, d\tau  \nonumber \\
    &\leq \theta_a(s) ^\gamma  \| \Psi_N v ( \cdot, s ) \|_2 ^{2} + \int\limits_{s}^{t}   \theta_a( \tau )^{ \gamma } \int\limits_{\R^3}  \big[-\nabla |v|^2 \cdot  \nabla  \Psi_N^2   + (|v|^2 + 2 p) (v \cdot \nabla \Psi_N^2) \big]  \, d \tau  \nonumber \\
     &\qquad+ \int\limits_{s}^{t}   \theta_a(\tau)^{ \gamma - 2 } \int\limits_{\R^3}   \Big( \frac{1}{2}a x \cdot \nabla  \Psi_N ^2 (x, \tau ) \Big) |v(x, \tau)|^2 \, d \tau .
\end{align}
Notice that 
\begin{equation}\label{e.boundxnabla}
\|a x \cdot \nabla  \Psi_N ^2\|_{L^\infty(\R^3)}\lesssim aN,
\end{equation}
which will be used below, see \eqref{e.controlKI}. 
Now, observe that the identity 
\begin{equation}\label{e.identity}
    \Psi_N^2 | \nabla v |^2 = | \nabla (\Psi_N v ) |^2 - | \nabla \Psi_N |^2 | v |^2 - \frac{1}{2} \nabla \Psi_N^2 \cdot \nabla | v |^2
\end{equation}
allows $\nabla (\Psi_N v) $ to appear. Hence, using the identity \eqref{e.identity} and multiplying \eqref{ee1} by $ \theta_a(s) ^{-\gamma -1}$, 
and for all $s\in(-1,0)$, $t \in (s, 0]$,
\begin{align}
\label{ee2}
    &\frac{ \theta_a (t)^\gamma }{ \theta_a(s) ^{\gamma +1} } \| \Psi_N v ( \cdot , t ) \|_2 ^{2} + \frac{a \gamma }{2}  \int\limits_{s}^{t} \frac{ \theta_a(\tau)^{ \gamma - 2 }  }{ \theta_a(s) ^{\gamma +1}  } \| \Psi_N v \|_2 ^{2}  \, d\tau  + 2\int\limits_{s}^{t}  \frac{ \theta_a (\tau)^\gamma } { \theta_a(s) ^{\gamma +1} } \| \nabla ( \Psi_N v ) \|_2^2  \, d\tau  \nonumber \\
    &\qquad\leq \frac{1}{ \theta_a(s) } \| \Psi_N v (\cdot, s) \|_2 ^{2} + \int\limits_{s}^{t}  \frac{  \theta_a (\tau)^\gamma }{ \theta_a(s) ^{\gamma +1} } \int\limits_{\R^3} \big[2|\nabla  \Psi_N|^2 |v|^2 + (|v|^2 + 2 p) (v \cdot \nabla \Psi_N^2)\big]  \, dx \, d \tau  \nonumber \\
    &\qquad\quad+ \int\limits_{s}^{t} \frac{ \theta_a(\tau)^{ \gamma - 2 } }{ \theta_a(s) ^{\gamma +1} } \int\limits_{\R^3} \frac{1}{2} a x \cdot \nabla  \Psi_N ^2   |v|^2 \, dx \, d \tau.
\end{align}
We now split the right hand side of \eqref{ee2} into the pressureless terms
\begin{align}\label{e.KI}
\begin{split}
    K^I(s,t) =& \int\limits_{s}^{t} \frac{ \theta_a(\tau)^\gamma }{\theta_a(s) ^ {\gamma + 1} } \int\limits_{\R^3} \big[ 2|\nabla  \Psi_N|^2 |v|^2 + |v|^2 v \cdot \nabla \Psi_N^2 \big] \, dx \, d \tau \\
    &+ \frac{1}{2} \int\limits_{s}^{t} \frac{ \theta_a(\tau)^{\gamma - 2 } }{\theta_a(s) ^ {\gamma + 1} } \int\limits_{\R^3}a x \cdot \nabla  \Psi_N ^2 (x, \tau)|v|^2 \, dx \, d\tau
\end{split}
\end{align}
and the pressure term
\begin{equation}\label{e.KII}
    K^{II}(s,t) =  \int\limits_{s}^{t} \frac{ \theta_a (\tau) ^\gamma }{\theta_a(s) ^{\gamma+1} } \int\limits_{\R^3} 2 p  (v \cdot \nabla \Psi_N^2 )\, dx \, d \tau.
\end{equation}
The quantity $K^I(s,t)$ is easy to bound using the a priori critical control \eqref{leqm}. Indeed, all the terms in the right hand side of \eqref{e.KI} involve a derivative of the cut-off function $\Psi_N$, hence, these integrals are all supported in the complement of \eqref{e.paraboloid}. The quantity $K^{II}(s,t)$ concentrates the difficulties, because of the nonlocality of the pressure. We handle this issue first.

\subsection{Pressure estimates}

To treat the quantity $K^{II}(s,t)$, observe that 
\begin{equation*}
    \nabla \Psi_N^2 (x, \tau) = \frac{2}{\theta_a(\tau)} \Psi_N(x,t) \nabla \varphi_N \Big( \frac{x}{\theta_a(\tau)} \Big),
\end{equation*}
thus,
\begin{equation*}
    \big|K^{II}(s,t)\big|  \leq   C_0 \int\limits_{s}^{t} \frac{ \theta_a(\tau)^{\gamma-1} }{\theta_a(s) ^ { \gamma + 1 } } \int\limits_{B_0((N+1)\theta_a (\tau))\setminus B_0(N\theta_a (\tau))}  |pv\Psi_N|  \, dx \, d \tau.
\end{equation*}
Then we decompose the pressure term.
Consider a  
cut-off function $\eta\in C^\infty_c(\R^3)$ such that:
\begin{equation*}
    \eta(x)= \left\{ \begin{array}{lcc}
             1 &   \text{if}  & |x| < \frac{1}{2} \sqrt{a},  \\
             \in [0,1] &  \text{if} & \frac{1}{2} \sqrt{a} < |x| < \frac{3}{4} \sqrt{a}, \\
             0 &  \text{if}  & \frac{3}{4} \sqrt{a} < |x| ,
             \end{array}
   \right.
\end{equation*}
and satisfying $ |\nabla \eta | < \frac{C}{\sqrt a}
$ and $|\nabla^2 \eta | <\frac{C}{a}$.
Let us also write $\eta(x) = \eta_1(x,t) + \eta_2(x,t)$, where $\eta_1$ and $\eta_2$ are $C^\infty_0$ with values in $[0,1]$, and such that $\eta_1(\cdot,t)$ is equal to $1$ on $ B_{0} (\theta_a(t)) $ and $\eta_1( \cdot ,t)$ is supported on $B_{0} (2 \theta_a (t) ) $.\footnote{Notice that this decomposition of $\eta$ is only needed for $t\in(-\frac14,0)$.} This decomposition is classical, see for instance \cite{CKN82}, except for the additional localization on the paraboloid via $\eta_1$. We summarize the decomposition in the following lemma.

\begin{Lemma}
\label{pd}
Let $v$ be a finite-energy weak solution to the Navier-Stokes equations \eqref{ns1intro} in $\R^3\times(-1,0)$ such that $v\in C^\infty(-1,T;C^\infty(\R^3))$ for all $T\in(-1,0)$. 
There exist a universal constant $C\in(0,\infty)$ such that for all $\tau \in ( -\frac{1}{16(N+1)^2 } , \, 0 )$ and for all $x \in B_0((N+1)\theta_a(\tau))\setminus B_0(N\theta_a(\tau))$,
\begin{align}
    |p(x,\tau)|=|\eta(x)p(x,\tau)| \leq & \frac{C}{ N^3 \theta_a(\tau)^3 }  \int\limits_{B_{0} (\theta_a (\tau) )} |v(y,\tau)|^2 \, dy \nonumber\\
 &+\frac{C}{ N^3 \theta_a(\tau)^3 }  \int\limits_{B_{0} (2 \theta_a (\tau) )\setminus B_{0} (\theta_a (\tau) ) } |v(y,\tau)|^2 \, dy \nonumber\\
    &+ \Bigg| \frac{1}{4 \pi} \int\limits_{ B_{0} ( \frac{3}{4} \sqrt{a} ) \setminus B_{0} (2\theta_a(\tau) ) }  \frac{\partial^2}{\partial_{y_i} \partial_{y_j}} \left(  \frac{1}{|x-y|} \right) [\eta_2 v_i v_j] (y,\tau ) \, dy \, \Bigg| \nonumber\\
    & + \frac{C}{(\sqrt{a} )^3} \int\limits_{ B_{0} ( \frac{3}{4} \sqrt{a} ) \setminus B_{0} ( \frac{1}{2} \sqrt{a} ) } [ |v|^2 + |p| ] (y,\tau) \, dy\nonumber\\
    =:&\bar p_1(\tau)+\bar p_2(\tau)+\bar p_3(x,\tau)+\bar p_4(\tau).\label{e.barpressure}
\end{align}
\end{Lemma}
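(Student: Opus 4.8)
The starting point is the standard representation of the pressure for the Navier--Stokes equations. Because $v$ is smooth on $(-1,T)$ and the pressure is determined up to constants by $-\Delta p = \partial_i\partial_j(v_iv_j)$, I would first localize: introduce the cut-off $\eta$ described above and write the equation satisfied by $\eta p$, namely $-\Delta(\eta p) = \eta\,\partial_i\partial_j(v_iv_j) - 2\nabla\eta\cdot\nabla p - (\Delta\eta) p$. Solving with the Newtonian potential gives, for $x$ in the annulus $B_0(\frac12\sqrt a)$ where $\eta\equiv 1$ (hence in particular for the $x$ of interest once we check they lie in that region — see the constraint $\tau\in(-\tfrac{1}{16(N+1)^2},0)$, which forces $(N+1)\theta_a(\tau)<\tfrac14\sqrt a$),
\[
p(x,\tau)=\frac{1}{4\pi}\int \frac{1}{|x-y|}\,\eta(y)\,\partial_i\partial_j(v_iv_j)(y,\tau)\,dy+\text{(lower-order terms from }\nabla\eta,\Delta\eta).
\]
Integrating by parts twice in the first integral moves the derivatives onto the kernel and onto $\eta$; the terms where both derivatives hit $\eta$, or where one hits $\eta$, are supported in $\{\tfrac12\sqrt a<|y|<\tfrac34\sqrt a\}$ and, together with the $\nabla\eta\cdot\nabla p$ and $(\Delta\eta)p$ contributions, produce exactly the far term $\bar p_4(\tau)$ after bounding the kernel and its derivatives by $C(\sqrt a)^{-3}$ there (since $|x|<\tfrac14\sqrt a$ keeps $x$ a fixed distance from that shell).

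The remaining ``double-derivative on the kernel'' piece is $\frac{1}{4\pi}\int \partial_{y_i}\partial_{y_j}(|x-y|^{-1})\,[\eta v_iv_j](y,\tau)\,dy$ (as a principal value plus the usual $\delta$-contribution, which is harmless here because $x$ is outside $\supp$ of the nearby pieces of $v\otimes v$ we isolate). Now I use the splitting $\eta=\eta_1+\eta_2$ and further split $\eta_1 = \mathbbm{1}_{B_0(\theta_a)} + (\eta_1-\mathbbm{1}_{B_0(\theta_a)})$ in effect, i.e. separate the integral over $B_0(\theta_a(\tau))$, over $B_0(2\theta_a(\tau))\setminus B_0(\theta_a(\tau))$, and over $B_0(\tfrac34\sqrt a)\setminus B_0(2\theta_a(\tau))$ — the last being precisely $\bar p_3(x,\tau)$, left as is. For the first two, the key geometric fact is that $x$ lies in the annulus $B_0((N+1)\theta_a(\tau))\setminus B_0(N\theta_a(\tau))$ with $N\geq 4$, so every $y\in B_0(2\theta_a(\tau))$ satisfies $|x-y|\geq (N-2)\theta_a(\tau)\geq \tfrac{N}{3}\theta_a(\tau)\gtrsim N\theta_a(\tau)$; hence $|\partial^2_{yy}(|x-y|^{-1})|\leq C|x-y|^{-3}\leq C N^{-3}\theta_a(\tau)^{-3}$ uniformly on those balls. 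Pulling this bound out of the integrals yields the first two terms $\bar p_1(\tau)$ and $\bar p_2(\tau)$.

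The only genuinely delicate point is the bookkeeping around the singular integral operator: one must be careful that $x$ is separated from the support of the portions of $v\otimes v$ being estimated (true here because $|x|\in(N\theta_a,(N+1)\theta_a)$ while those portions live in $|y|<2\theta_a$, and $N\geq 4$), so that differentiating under the integral sign is legitimate and no Calder\'on--Zygmund cancellation is needed for $\bar p_1,\bar p_2,\bar p_4$; the Calder\'on--Zygmund structure is only retained, untouched, inside $\bar p_3$. A secondary technical check is that the constraint $\tau\in(-\tfrac{1}{16(N+1)^2},0)$ indeed guarantees $(N+1)\theta_a(\tau)=\,(N+1)\sqrt{a(-\tau)}<\tfrac14\sqrt a$, so that the whole annulus containing $x$ sits inside $\{\eta\equiv 1\}$ and the identity $p=\eta p$ on that annulus is valid — this is what lets us write $|p(x,\tau)|=|\eta(x)p(x,\tau)|$ at the start of \eqref{e.barpressure}. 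Apart from these points the proof is a routine potential-theoretic estimate; no compactness or energy input is needed, only smoothness of $v$ to justify the representation formula and the integrations by parts.
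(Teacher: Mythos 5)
Your proposal is correct and follows essentially the same route as the paper: represent $\eta p$ via the Newtonian potential acting on $\Delta(\eta p)$, integrate by parts, split the resulting singular integral according to the regions $B_0(\theta_a(\tau))$, $B_0(2\theta_a(\tau))\setminus B_0(\theta_a(\tau))$, $B_0(\tfrac34\sqrt a)\setminus B_0(2\theta_a(\tau))$ and the cut-off shell, and use the distance bounds $(N-2)\theta_a(\tau)$ and $\tfrac14\sqrt a$ guaranteed by $\tau\in(-\tfrac{1}{16(N+1)^2},0)$. The only detail left implicit is that the $\nabla\eta\cdot\nabla p$ contribution must be integrated by parts once more to trade $\nabla p$ for $p$ (producing the kernel-gradient and $\Delta\eta\,p$ terms in $\bar p_4$), exactly as in the paper's $p_4$.
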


\begin{proof}
Using the identity
\begin{equation*}
    \eta(x) p(x,\tau) =- \frac{1}{4 \pi} \int\limits_{\mathbb{R}^3} \frac{1}{|x-y|} [ \Delta (\eta p) ] (y,\tau) \, dy
\end{equation*}
and the fact that $\Delta p = -\partial_i \partial_j (v_i v_j) $, we obtain
\begin{equation*}
    \eta(x) p(x,\tau) := p_1(x,\tau)+ p_2(x,\tau)+ p_3(x,\tau)+p_4(x,\tau),
\end{equation*}
where
\begin{align*}
p_1(x,\tau) :=& \frac{1}{4 \pi} \int\limits_{B_{0} (\theta_a (\tau) ) } \frac{\partial^2}{\partial_{y_i} \partial_{y_j}} \left(  \frac{1}{|x-y|} \right) [\eta_1 v_i v_j] (y,\tau ) \, dy,
\end{align*}
\begin{align*}
   p_2(x,\tau) :=& \frac{1}{4 \pi} \int\limits_{B_{0} (2 \theta_a (\tau))\setminus B_{0} (\theta_a (\tau) )} \frac{\partial^2}{\partial_{y_i} \partial_{y_j}} \left(  \frac{1}{|x-y|} \right) [\eta v_i v_j] (y,\tau ) \, dy,
   \end{align*}
\begin{align*}
 p_3(x,\tau) :=& \frac{1}{4 \pi} \int\limits_{ B_{0} ( \frac{3}{4} \sqrt{a} ) \setminus B_{0} (2\theta_a(\tau) ) }  \frac{\partial^2}{\partial_{y_i} \partial_{y_j}} \left(  \frac{1}{|x-y|} \right) [\eta_2 v_i v_j] (y,\tau ) \, dy
\end{align*}
and
\begin{align*}
     p_4(x,\tau) :=&  \frac{1}{2 \pi} \int\limits_{ B_{0} ( \frac{3}{4} \sqrt{a} ) \setminus B_{0} ( \frac{1}{2} \sqrt{a} ) }   \frac{x_i - y_i}{|x-y|^3} \left(  \frac{\partial \eta}{ \partial_{y_j}}  v_i v_j \right)  (y,\tau ) \, dy \\
 &  + \frac{1}{4 \pi} \int\limits_{ B_{0} ( \frac{3}{4} \sqrt{a} ) \setminus B_{0} ( \frac{1}{2} \sqrt{a} ) }   \frac{1}{|x-y|} \left(  \frac{\partial^2 \eta}{\partial_{y_i} \partial_{y_j}}  v_i v_j \right)  (y,\tau ) \, dy \\
    & + \frac{1}{2 \pi} \int\limits_{ B_{0} ( \frac{3}{4} \sqrt{a} ) \setminus B_{0} ( \frac{1}{2} \sqrt{a} ) }   \frac{x_i - y_i}{|x-y|^3} \left(  \frac{\partial \eta}{ \partial_{y_j}}  \, p \right)  (y,\tau ) \, dy \\
     &  + \frac{1}{4 \pi} \int\limits_{ B_{0} ( \frac{3}{4} \sqrt{a} ) \setminus B_{0} ( \frac{1}{2} \sqrt{a} ) }   \frac{1}{|x-y|} \left(  \Delta \eta \, p \right)  (y,\tau ) \, dy.
\end{align*}
Notice that 
\begin{equation*}
   \dist\big(B_0(2 \theta_a (\tau)),B_0((N+1)\theta_a (\tau))\setminus B_0(N\theta_a (\tau))\big)\geq (N-2)\theta_a(\tau),
\end{equation*}
and 
\begin{equation*}
   \dist\big(B_0(\tfrac{1}{2}\sqrt{a}),B_0((N+1)\theta_a (\tau))\setminus B_0(N\theta_a (\tau))\big)\geq \frac{1}{4} \sqrt{a} 
\end{equation*}
for $\tau \in ( -  \frac{1}{16(N+1)^2}  , 0 )$. 
Combining the above two estimates with the above pressure decomposition concludes the proof of the lemma.\end{proof}

By Lemma \ref{pd}, we have for all $ s \in (  - \frac{1}{16(N+1)^2}, \, 0 )  $ and for all $ t \in (s,0] $,
\begin{multline}\label{e.estKII}
\big|K^{II} (s)\big| \\
\leq \frac{C} { \theta_a(s) ^{\gamma + 1}  } \int\limits_{s}^{t} \theta_a  (\tau)^{\gamma -1} \int\limits_{B_0((N+1)\theta_a (\tau))\setminus B_0(N\theta_a (\tau))}  \big( \bar p_1(\tau)  + \bar p_2(\tau)   +  \bar p_3(x,\tau)  + \bar p_4(\tau) \big) |v\Psi_N|  \, dx \, d \tau,
\end{multline}
where $\bar p_i$, $i=1,\ldots\, 4$ are defined in \eqref{e.barpressure}. We now turn to the estimates of each term in the right hand side of \eqref{e.estKII}.

\subsubsection*{Term involving $\bar p_1$}
The term involving $\bar p_1$ is the main source of difficulties in this paper. Indeed, this term concentrates the nonlocal part of the pressure that cannot be directly controlled via the a priori critical assumption. Here we control this term via the kinetic energy in $B_{0} (  \theta_a(\tau) )$. In Section \ref{gronestimate} we will then be able to control the scale-invariant local energy via a Gronwall type argument. 

From H\"older's inequality for Lorentz spaces, see Proposition \ref{hunt}, the fact that
\begin{equation*}\label{e.L321}
\|\mathbf{1}_{B_0((N+1)\theta_a (\tau))\setminus B_0(N\theta_a (\tau))}\|_{L^{\frac32,1}(\R^3)}\leq CN^2\theta_a(\tau)^2,
\end{equation*}
and by \eqref{leqm}, we obtain
\begin{align}\label{e.boundvL1}
\begin{split}
    \int\limits_{B_0((N+1)\theta_a (\tau))\setminus B_0(N\theta_a (\tau))} | v (x,\tau)| \, dx \leq& C N^2 \theta_a(\tau)^2 \|v(\cdot,\tau)\|_{L^{3,\infty}(B_0((N+1)\theta_a (\tau))\setminus B_0(N\theta_a (\tau)))}\\
    \leq&C N^2M \theta_a(\tau)^2.
\end{split}
\end{align}
Therefore
\begin{align}
    &\int\limits_{s}^{t} \theta_a(\tau) ^{\gamma -1}  \int\limits_{B_0((N+1)\theta_a (\tau))\setminus B_0(N\theta_a (\tau))}  \bar p_1(\tau) |v(x,\tau)\Psi_N|    \, dx \, d \tau\nonumber \\
    &\leq  \, \frac{C}{N^3} \int\limits_{s}^{t} \theta_a(\tau) ^{\gamma-4}  \Bigg(  \int\limits_{ B_{0} (  \theta_a(\tau) ) } |v(x,\tau)|^2 \, dx \Bigg) \int\limits_{B_0((N+1)\theta_a (\tau))\setminus B_0(N\theta_a (\tau))} | v(x,\tau)| \, dx 
    \, d \tau\nonumber \\ 
    &\leq  \, \frac{C_*M}{N} 
     \int\limits_{ s}^{t} \theta_a(\tau) ^{\gamma -2}  \int\limits_{ B_{0} (  \theta_a(\tau) ) } |v(x,\tau)|^2 \, dx \, d\tau,\label{e.definitionC*}
\end{align}
where $C_*\in(0,\infty)$ is a universal constant.

\subsubsection*{Term involving $\bar p_2$}

We estimate this term using H\"older's inequality for Lorentz spaces, \eqref{leqm} and \eqref{e.boundvL1},
\begin{align*}
    &\int\limits_{s}^{t} \theta_a(\tau) ^{\gamma -1}  \int\limits_{B_0((N+1)\theta_a (\tau))\setminus B_0(N\theta_a (\tau))}  \bar p_2(\tau) |v(x,\tau)\Psi_N|    \, dx \, d \tau \\
     &\leq \, \frac{C}{N^3} \int\limits_{s}^{t} \theta_a(\tau) ^{\gamma-4}  \Bigg( \int\limits_{ B_{0} ( 2 \theta_a(\tau) )  \setminus B_{0} (\theta_a(\tau) ) } |v(x,\tau)|^2 \, dy \Bigg) \int\limits_{B_0((N+1)\theta_a (\tau))\setminus B_0(N\theta_a (\tau))} | v(x,\tau)|  \, dx   \, d \tau \\
     &\leq \, \frac{CM^3}{N}\int\limits_{s}^{t} \theta_a(\tau) ^{\gamma -1}  \, d\tau\\
 &\leq \frac{CM^3}{a(\gamma + 1)N} \theta_a(s)^{ \gamma + 1 }.
\end{align*}

\subsubsection*{Term involving $\bar p_3$}

Observe that for this term, the singularity of the kernel is seen. Calder\'on-Zygmund's theorem gives for $\tau \in ( -\frac{1}{16(N+1)^2 } , \, 0 )$,
\begin{equation}
\label{CZ}
\|\bar p_3 (\cdot,\tau)\|_{L^{\frac32,\infty}(B_0((N+1)\theta_a (\tau))\setminus B_0(N\theta_a (\tau)))}\leq C\|v(\cdot,\tau)\|_{L^{3,\infty}(B_{0} ( \frac{3}{4}\sqrt{a} ) \setminus B_{0} (\theta_a(\tau)))}^2\leq CM^2.
\end{equation}
Then, by H\"older's inequality for Lorentz spaces, interpolation of $L^{3,1}$ between $L^2$ and $L^6$ \cite[Theorem 5.3.1]{BL76}, the Sobolev inequality and Young's inequality,
\begin{align*}
    &\int\limits_{s}^{t} \theta_a(\tau) ^{\gamma -1}  \int\limits_{B_0((N+1)\theta_a (\tau))\setminus B_0(N\theta_a (\tau))} \bar p_3(x,\tau) |v(x,\tau)\Psi_N|    \, dx \, d \tau \\
    &\leq  \int\limits_{s}^{t} \theta_a(\tau) ^{\gamma -1} \|\bar p_3 (\cdot,\tau)\|_{L^{\frac32,\infty}(B_0((N+1)\theta_a (\tau))\setminus B_0(N\theta_a (\tau)))} \|v(\cdot,\tau)\Psi_N\|_{L^{3,1}(\R^3)}\, d \tau \\
    &\leq C_{\bar p_3}  M^2\int\limits_{s}^{t}\theta_a(\tau) ^{\gamma-\frac32}  \|\Psi_Nv(\cdot,\tau)\|_{L^2(\R^3)}\, d \tau+C_{\bar p_3}  M^2\int\limits_{s}^{t}\theta_a(\tau) ^{\gamma-\frac12}  \|\nabla (\Psi_Nv(\cdot,\tau))\|_{L^2(\R^3)}\, d \tau,
\end{align*}
where $C_{\bar p_3}\in(0,\infty)$ is a universal constant. 
Now, by the Cauchy-Schwarz and Young's inequalities we have on the one hand
\begin{align*}
&C_{\bar p_3}M^2\int\limits_{s}^{t}\theta_a(\tau) ^{\gamma-\frac32}  \, \|\Psi_Nv(\cdot,\tau)\|_{L^2(\R^3)}\\
&\leq C_{a,\gamma}M^4\theta_a(s)^{\gamma+1}+\frac{a\gamma}{16}\int\limits_s^t\theta_a(\tau)^{\gamma-2}\|\Psi_Nv(\cdot,\tau)\|_{L^2}^2\, d\tau
\end{align*}
and on the other hand
\begin{align*}
&C_{\bar p_3}M^2\int\limits_{s}^{t}\theta_a(\tau) ^{\gamma-\frac12}  \, \|\nabla(\Psi_Nv(\cdot,\tau))\|_{L^2(\R^3)}\\
&\leq C_{a,\gamma}M^4\theta_a(s)^{\gamma+1}+\frac{1}{4}\int\limits_s^t\theta_a(\tau)^{\gamma}\|\nabla(\Psi_Nv(\cdot,\tau))\|_{L^2}^2\, d\tau.
\end{align*}

\subsubsection*{Term involving $\bar p_4$}

We have by H\"older's inequality for Lorentz spaces and \eqref{e.boundvL1}
\begin{align*}
    &\int\limits_{s}^{t} \theta_a(\tau)^{\gamma -1} \int\limits_{B_0((N+1)\theta_a (\tau))\setminus B_0(N\theta_a (\tau))} \bar p_4(\tau) |v(x,\tau)\Psi_N|    \, dx \, d \tau \\
    &\leq  \, \frac{CN^2M}{(\sqrt{a} )^3} \int\limits_{s}^{t} \theta_a(\tau) ^{\gamma +1} \Bigg( \int\limits_{ B_{0} ( \frac{3}{4}\sqrt{a} ) \setminus B_{0} \left( \frac{1}{2}\sqrt{a} \right)  } \big[ |v|^2 + |p| \big] \, dx \Bigg)\, d \tau \\
    &\leq  \, \frac{CSMN^2}{(\sqrt{a} )^3}\theta_a(s) ^{\gamma + 1 },
    \end{align*}
where
\begin{equation*}
    S = \int\limits_{ - 1}^{0} \Bigg( \int\limits_{ B_{0} ( \frac{3}{4}\sqrt{a} ) \setminus B_{0} ( \frac{1}{2}\sqrt{a} )  } \big[ |v|^3 + |p|^{\frac{3}{2}} \big] \, dx \Bigg)^{ \frac{2}{3} } <\infty.
\end{equation*}

\subsection{Local energy estimates under a priori critical control outside of the paraboloid}

We recall that $M\geq 1$ and $N\geq 4$. 
Summarizing the pressure estimates above 
we get, for all $s \in (  - \frac{1}{16(N+1)^2 } , \, 0 ) $ and for all $t \in (s,0]$, first $\theta_a(s)\leq\frac{\sqrt a}{4(N+1)}$ and then
\begin{multline}\label{e.controlKII}
    \big|K^{II}(s)\big| \leq \, \frac{CM }{N} \int\limits_{s}^{t}  \frac{ \theta_a(\tau)^{ \gamma - 2 }  }{ \theta_a(s) ^{\gamma +1}  } \| \Psi_N v \|_2 ^{2}\, d\tau +\frac{a\gamma}{16}\int\limits_s^t\frac{ \theta_a(\tau)^{ \gamma - 2 }  }{ \theta_a(s) ^{\gamma +1}  }\|\Psi_Nv(\cdot,\tau)\|_{L^2}^2\, d\tau\\
    + \frac{1}{4}\int\limits_s^t\frac{ \theta_a(\tau)^{ \gamma}  }{ \theta_a(s) ^{\gamma +1}  }\|\nabla(\Psi_Nv(\cdot,\tau))\|_{L^2}^2\, d\tau+C_{a,\gamma}(M^4+SMN^2).
    \end{multline}
Similarly, for $ K^{I}(s)$, using in particular \eqref{e.boundxnabla}, we get
\begin{multline}\label{e.controlKI}
    \big|K^{I}(s)\big| \leq \frac{a\gamma}{16}\int\limits_s^t\frac{ \theta_a(\tau)^{ \gamma-2}  }{ \theta_a(s) ^{\gamma +1}  }\|\Psi_Nv(\cdot,\tau)\|_{L^2}^2\, d\tau+ \frac{1}{4}\int\limits_s^t\frac{ \theta_a(\tau)^{ \gamma}  }{ \theta_a(s) ^{\gamma +1}  }\|\nabla(\Psi_Nv(\cdot,\tau))\|_{L^2}^2\, d\tau\\
    +C_{a,\gamma}(M^4+M^2N^2).
    \end{multline}
Then, from \eqref{ee2} combined with \eqref{e.controlKI} and \eqref{e.controlKII}, we find for $s \in (  - \frac{1}{16(N+1)^2 } , 0 ) $, and $t \in (s,0]$,
\begin{align}
\label{ee3}
    \frac{ \theta_a (t)^\gamma }{\theta_a(s) ^{\gamma +1} } & \| \Psi_N v (\cdot,t) \|_2 ^{2} + \frac{a \gamma}{4} \int\limits_{s}^{t}  \frac{ \theta_a(\tau)^{ \gamma - 2 }  }{ \theta_a(s) ^{\gamma +1}  } \| \Psi_N v \|_2 ^{2}\, d\tau + \int\limits_{s}^{t}  \frac{ \theta_a (\tau)^\gamma } { \theta_a(s) ^{\gamma +1} } \| \nabla ( \Psi_N v ) \|_2^2\, d\tau  \nonumber\\
    \leq &  \frac{1}{\theta_a(s) } \| \Psi_N v (\cdot,s) \|_2 ^{2} +  \frac{C_*M}{N} \int\limits_{s}^{t}  \frac{ \theta_a(\tau)^{ \gamma - 2 }  }{ \theta_a(s) ^{\gamma +1}  } \| \Psi_N v \|_2 ^{2}\, d\tau
     \nonumber \\
    & +C_{a,\gamma}(M^4+SMN^2+M^2N^2).
    \end{align}
where $C_*\in(0,\infty)$ is the universal constant in \eqref{e.definitionC*}.

Let us emphasize that the gain of the factor $N^{-1}$ in the term $\frac{C_*M}{N} \int\limits_{s}^{t}  \frac{ \theta_a(\tau)^{ \gamma - 2 }  }{ \theta_a(s) ^{\gamma +1}  } \| \Psi_N v \|_2 ^{2}$ plays a crucial role below. Taking $N$ large makes it possible to make $\frac{C_*M}{N}$ small; see \eqref{e.choiceN}.

\section{Scale-invariant bounds on the interior of the paraboloid}
\label{sec.scaleinv}

The objective of this section is to prove the following result. 

\begin{Proposition}\label{prop.scaleinv}
Let $M\geq 1$. Let $v$ be a finite-energy weak solution to the Navier-Stokes equations \eqref{ns1intro} in $\R^3\times(-1,0)$ such that $v\in C^\infty(-1,T;C^\infty(\R^3))$ for all $T\in(-1,0)$.\\
Assume that
\begin{equation*}
    \esssup_{ s \in (  - 1 , 0) }  \|v(\cdot,s)\|_{L^{3,\infty}(B_0(\sqrt{a})\setminus B_0(\theta_a(s)))}\leq M.
\end{equation*}
Then,
\begin{equation}\label{controldef}
    \esssup_{s \in \left(  -1,0 \right) }  \frac{1}{\theta_a(s)}\int\limits_{B_0(\theta_a(s))}|v(x,\tau)|^2\, d\tau< + \infty.
\end{equation}
\end{Proposition}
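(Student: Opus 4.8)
The plan is to prove \eqref{controldef} from the differential inequality \eqref{ee3} by a Gronwall-type argument, after first passing through the auxiliary range $\gamma > 0$ to make everything well-defined, and then letting $\gamma \to 0^+$ (or directly controlling a $\gamma$-independent quantity). Concretely, set
\begin{equation*}
f(s) := \frac{1}{\theta_a(s)}\int\limits_{B_0(\theta_a(s))}|v(x,s)|^2\, dx, \qquad s \in (-1,0),
\end{equation*}
which, since $\Psi_N v = v$ on $B_0(N\theta_a(\tau)) \supset B_0(\theta_a(\tau))$, is dominated by the first term on the left of \eqref{ee3}, namely $\theta_a(t)^\gamma\theta_a(s)^{-\gamma-1}\|\Psi_N v(\cdot,t)\|_2^2$ evaluated appropriately. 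First I would fix $N = N(M)$ so large that $\frac{C_* M}{N} \leq \frac{a\gamma}{8}$ (this is the place the gain of the factor $N^{-1}$ from the pressure estimate is used, cf.\ \eqref{e.choiceN}), so that the bad term $\frac{C_* M}{N}\int_s^t \theta_a(\tau)^{\gamma-2}\theta_a(s)^{-\gamma-1}\|\Psi_N v\|_2^2\,d\tau$ on the right of \eqref{ee3} is absorbed into the damping term $\frac{a\gamma}{4}\int_s^t(\cdots)$ on the left, leaving a clean inequality of the form
\begin{equation*}
\frac{\theta_a(t)^\gamma}{\theta_a(s)^{\gamma+1}}\|\Psi_N v(\cdot,t)\|_2^2 + \frac{a\gamma}{8}\int\limits_s^t \frac{\theta_a(\tau)^{\gamma-2}}{\theta_a(s)^{\gamma+1}}\|\Psi_N v\|_2^2\,d\tau \leq \frac{1}{\theta_a(s)}\|\Psi_N v(\cdot,s)\|_2^2 + C_{a,\gamma,M,N}(1+S).
\end{equation*}

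Next I would convert this into a bound on $\sup_s f(s)$. The key observation is that the left side at time $t$ contains the quantity we want, while the right side contains the \emph{same type} of quantity at the earlier time $s$, but with $\Psi_N v$ supported on a much larger ball $B_0((N+1)\theta_a(s))$, not on $B_0(\theta_a(s))$; however, that larger ball at time $s$ is still strictly \emph{inside} the paraboloid-plus-collar only up to radius $(N+1)\theta_a(s)$, and for $s$ close to $0$ this region has small measure, so one needs to relate $\|\Psi_N v(\cdot,s)\|_2^2$ back to $f$ and to the critical control \eqref{leqm}. I would split $\|\Psi_N v(\cdot,s)\|_2^2 = \int_{B_0(N\theta_a(s))}|v|^2 + \int_{B_0((N+1)\theta_a(s))\setminus B_0(N\theta_a(s))}|v|^2$; the second piece is $\lesssim_N M^2\theta_a(s)$ by \eqref{leqm} and Hölder for Lorentz spaces, hence contributes $\lesssim_{N,M} 1$ after division by $\theta_a(s)$. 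For the first piece, the honest way is to iterate: write $B_0(N\theta_a(s))$ as a union of $O(N^3)$ translates/rescalings where the $L^3$ control applies outside the inner paraboloid, reducing to controlling only $\frac{1}{\theta_a(s)}\int_{B_0(\theta_a(s))}|v|^2 = f(s)$ itself. This yields an inequality $f(t) \leq C_1 f(s) + C_2$ uniformly over a sufficiently short but fixed-length time window, which by a standard continuity/bootstrap argument starting from a time $s_0$ where $f(s_0) < \infty$ (available since $v$ is smooth on $(-1,0)$ and $\theta_a(s_0) > 0$) gives $\sup_{s \in (s_0,0)} f(s) < \infty$; combined with smoothness on $(-1,s_0]$ this gives \eqref{controldef}. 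Finally, one uses the passage $\gamma > 0 \Rightarrow$ boundedness of $f \Rightarrow$ \eqref{ee2} makes sense for $\gamma > -1$ (as flagged in the "two technical points" paragraph and in the footnote near \eqref{hypforene}) to confirm the constants do not degenerate.

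The main obstacle I expect is the Gronwall/absorption step itself: making \eqref{ee3} close requires that the constant in front of the bad integral, $\frac{C_* M}{N}$, genuinely beat the coefficient $\frac{a\gamma}{4}$ of the damping term \emph{for all admissible $\gamma$ down to $0$}, which it does not uniformly as $\gamma \to 0$ — so one must be careful about the order of quantifiers: fix $\gamma > 0$ first, run the argument to get $f$ bounded with a $\gamma$-dependent constant, and only afterwards revisit $\gamma = 0$. A second delicate point is that \eqref{ee3} only holds for $s \in (-\tfrac{1}{16(N+1)^2},0)$ and with the additional constraint $\theta_a(s) \leq \tfrac{\sqrt a}{4(N+1)}$, so the Gronwall argument lives only on a small time interval near the singular time; I would handle $(-1, -\tfrac{1}{16(N+1)^2}]$ separately using smoothness of $v$ there. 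I also need $N \geq 4$ and the constraint $a < \tfrac{4}{3}\lambda_S(B_0(1))$ (Footnote \ref{foot.a}) to be respected so that the damping coefficient is actually positive and large enough — this is where the eigenvalue hypothesis on $a$ enters. Modulo these bookkeeping issues, the argument is a weighted-energy Gronwall estimate with the pressure already tamed by Section \ref{pressuresection}.
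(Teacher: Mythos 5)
There is a genuine gap at the heart of your Gronwall step, and it is precisely the point the paper's proof is built to overcome. After absorbing $\frac{C_*M}{N}$ into $\frac{a\gamma}{4}$, your inequality relates $f(t)$ to $f(s)$ only through the prefactor $(\theta_a(t)/\theta_a(s))^{\gamma+1}$ sitting in front of the first term of \eqref{ee3}: what you actually obtain is $(\theta_a(t)/\theta_a(s))^{\gamma+1} f(t) \le f(s) + C$. On a ``fixed-length time window'' this prefactor is not bounded below as $t\uparrow 0$, and on self-similar windows $s=2t$ it equals $2^{-(\gamma+1)/2}$, so iterating ``$f(t)\le C_1 f(s)+C_2$'' with $C_1>1$ along dyadic times gives geometric growth of $f$, not a bound; there is no smallness anywhere to run a continuity/bootstrap argument. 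Nor can you substitute integrability for boundedness: fixing $s=s_0$ in your cleaned-up inequality only yields $\int_{s_0}^0 \theta_a(\tau)^{\gamma-1} f(\tau)\,d\tau<\infty$, whereas the selection of good times $s_1'\in(2t_1,t_1)$ (as in \eqref{e.pigeonhole}) requires the \emph{scale-invariant} bound $\sup_s g_\gamma(s,0)<\infty$, i.e. control of $\theta_a(s)^{-2}\int_s^0 f\,d\tau$ uniformly as $s\to0^-$. The weight-induced term $\frac{a\gamma}{4}g_\gamma$ can never produce this: once the inequality is rewritten as an ODE in $s$ via the identity \eqref{lfg}, $f(s)=\frac{a(\gamma+1)}{2}g_\gamma(s,0)-\theta_a(s)^2\partial_s g_\gamma(s,0)$, the net coefficient of $g_\gamma$ is $\frac{a\gamma}{4}-\frac{a(\gamma+1)}{2}<0$ for every $\gamma$, i.e. anti-damping, and Gronwall then gives growth of $g_\gamma$ rather than boundedness.

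The missing ingredient is the paper's Section \ref{gain}: the dissipation term $\int_s^t\theta_a(\tau)^\gamma\theta_a(s)^{-\gamma-1}\|\nabla(\Psi_N v)\|_2^2\,d\tau$, which your argument never uses, is converted --- via a Bogovskii correction and the Poincar\'e inequality with the first Dirichlet--Stokes eigenvalue on $B_0((1+\xi)\theta_a(\tau))$ --- into a genuine damping term $\frac{\lambda_S(B_0(1))}{(1+\kappa)(1+\xi)^2}\,g_\gamma(s,t)-C_{a,\gamma,\xi,\kappa}NM^2$, see \eqref{gaingrad}. Only because $\lambda_S(B_0(1))$ beats $\frac{a}{2}(\frac{\gamma}{2}+1)+\frac{C_*M}{N}$ for $\gamma=1$, $\xi,\kappa$ small and $N$ large (this is exactly where the hypothesis $a=\lambda_S(B_0(1))$, more generally $a<\frac43\lambda_S(B_0(1))$, enters --- not through the $\frac{a\gamma}{4}$ term your absorption relies on) does the coefficient $A$ in \eqref{cfg} become positive, so that the integrating factor decays and $g_1(\cdot,0)$ is bounded up to $s=0$; boundedness of $f$ then follows from the mean-value choice of $s_1'$ and one further application of \eqref{ee3m} on the comparable interval $(s_1',t_1)$. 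Two smaller remarks: your covering of $B_0(N\theta_a(s))$ by $O(N^3)$ balls is unnecessary, since the whole region $B_0((N+1)\theta_a(s))\setminus B_0(\theta_a(s))$ lies outside the paraboloid and is handled directly by Lorentz--H\"older; and your worry about the quantifier order as $\gamma\to0$ is moot, because the proposition is proved with $\gamma=1$ fixed, the case $\gamma=0$ entering only later through \eqref{hypforene}. Without the Friedrichs/Stokes-eigenvalue step, however, your argument cannot close.
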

 
We shall connect information between critical quantities in order to rewrite the time weighted energy estimates as a differential inequality.
Let us denote
\begin{equation}
    \label{deff}
    f(s) :=  \frac{1}{ \theta_a(s) } \| \Psi_N v(\cdot,s) \|_2^2 
\end{equation}
and for all $\gamma\in\R$ we define\footnote{Notice that $f$ and $g_\gamma$ are dimensionally critical in the sense of Caffarelli, Kohn and Nirenberg \cite{CKN82}.}
\begin{equation}
    \label{defg}
    g_\gamma(s, t) := 
    \int\limits_{s}^{t} \frac{ \theta_a(\tau)^{ \gamma - 1 } } { \theta_a(s) ^{\gamma +1} }    f(\tau) \, d \tau .
\end{equation}
We compute the $s$ derivative of $g_\gamma(\cdot,t)$,
\begin{align}
\label{lfg}
    \frac{\partial g_\gamma}{\partial  s}(s,t) &= \frac{a(\gamma +1)}{2} \theta_a(s) ^{-(\gamma + 3)} \int\limits_{s}^{t} \theta_a(\tau)^{ \gamma - 2 }   \| \Psi_N v (\cdot,\tau) \|_2 ^{2} \, d \tau - \theta_a(s) ^{-(\gamma +1)} \theta_a(s)^{ \gamma - 1 }   f(s) \nonumber \\
    &= \theta_a(s)^{-2}\left[ \frac{a(\gamma +1)}{2} g_\gamma(s,t) - f(s) \right].
\end{align}
Now, let us rewrite \eqref{ee3} so as to make the functions $f$ and $g_\gamma$ appear,
\begin{align}
\begin{split}
\label{ee3m}
    \frac{ \theta_a (t)^ {\gamma+1} }{ \theta_a (s) ^{\gamma +1} } & f(t) + \frac{a \gamma}{4} g_\gamma (s,t) + \int\limits_{s}^{t}  \frac{ \theta_a (\tau)^\gamma } { \theta_a (s) ^{\gamma +1} } \| \nabla ( \Psi_N v ) \|_2^2 \, d \tau   \\
    \leq &  \, f(s) + \frac{C_* M}{N}g_\gamma(s,t)+ C_{a,\gamma}(M^4+SMN^2+M^2N^2).
\end{split}
\end{align}
Finally, we replace the identity \eqref{lfg} in \eqref{ee3m} to get for $s \in (  - \frac{1}{16(N+1)^2 } , 0 ) $, and $t \in (s,0 ]$,
\begin{align}
\begin{split}
\label{ee4}
     \frac{ \theta_a (t)^ {\gamma+1} }{ \theta_a(s) ^{\gamma +1} } f(t) \, + \, & \theta_a(s)^{2} \frac{\partial g_\gamma}{\partial s} (s,t) - \Big( \frac{a}{2}\Big(\frac\gamma 2+1\Big) + \frac{C_*M}{N}\Big) g_\gamma(s,t) + \int\limits_{s}^{t}  \frac{ \theta_a (\tau)^\gamma } { \theta_a (s) ^{\gamma +1} } \| \nabla ( \Psi_N v ) \|_2^2 \, d \tau   \\
     \leq& C_{a,\gamma}(M^4+SMN^2+M^2N^2).
\end{split}
\end{align}
where $C_*\in(0,\infty)$ is the universal constant in \eqref{e.definitionC*}.

\subsection{Gaining control through a Friedrich's type estimate}
\label{gain}

We demonstrate the following Poincar\'e-type inequality
\begin{equation*}
    \int\limits_{s}^{t}  \frac{ \theta_a (\tau)^\gamma } { \theta_a (s) ^{\gamma +1} } \| \nabla ( \Psi_N v ) \|_2^2 \, d \tau
    \geq \, \Lambda g_\gamma(s,t) - C_\Lambda NM^2
\end{equation*}
for some $\Lambda> \frac{a}{2}$ and a constant $C _\Lambda\in(0,\infty)$ depending on $\Lambda$; see \eqref{gaingrad} below. For that, we adapt the computations made in \cite{Neus14} by Neustupa to the weighted energy norms used in our work.

\subsubsection*{Step 1: a partition of the test function $\varphi_N$}

Consider $\xi \in (0, 1)$ and a test function $\varphi_{1}^\xi$ defined as
\begin{equation*}
    \varphi_{1}^\xi (x)= \left\{ \begin{array}{lcc}
             1 &   \text{if}  & |x|< 1 + \frac{1}{4} \xi  \\
             \in [0,1] &  \text{if} & 1 + \frac{1}{4} \xi < |x| < 1 + \frac{3}{4} \xi \\
             0 &  \text{if}  & 1 + \frac{3}{4} \xi  < |x|
             \end{array}
   \right.
\end{equation*}
and satisfying 
\begin{equation}
\label{consxi}
    |\nabla \varphi_{1}^\xi | \leq 4 \xi^{-1}.
\end{equation}
Let us define $\varphi_{N,2}^\xi := \varphi_N - \varphi_{1}^\xi$. Moreover, we denote $\Psi_{1}^\xi (x,t)  := \varphi_{1}^\xi (\frac{x}{\theta_a(t)}) $ and $\Psi_{N,2}^\xi :=\varphi_{N,2}^\xi (\frac{x}{\theta_a(t)}) $. Thus, $ \Psi_N = \Psi_{1}^\xi + \Psi_{N,2}^\xi $ in view of \eqref{e.eqPsi}.

Observe that
\begin{align*}
    \| \nabla (\Psi_{1}^\xi v) \|_2^2 &= \|  \Psi_{1}^\xi \nabla v \|_2^2 + 2 \big( \Psi_{1}^\xi \nabla v ,\nabla (\Psi_{1}^\xi ) \otimes  v \big)_{L^2} + \|  \nabla (\Psi_{1}^{\xi} ) \otimes  v \|_2^2 \\
    &\leq \|  \nabla v \|_{L^2 (\supp(\Psi_{1}^\xi) ) }^2 + \int\limits_{\R^3} \partial_i \big(({\Psi_{1}^\xi})^2\big) v_j  \partial_i v_j  \, dx + \|  \nabla (\Psi_{1}^{\xi} ) \otimes  v \|_2^2 \\
    &\leq \|  \nabla ( \Psi_N v ) \|_{2}^2 -\frac{1}{2} \int\limits_{\R^3} \Delta (\Psi_{1}^\xi) | v |^2 \, dx + \|  \nabla (\Psi_{1}^{\xi} ) \otimes  v \|_2^2
\end{align*}
Here we used that $\Psi_N=1$ on the support of $\Psi_{1}^{\xi}$. 
Thus,
\begin{equation}
\label{gradpsi1}
     \int\limits_{s}^{t}  \frac{ \theta_a (\tau)^\gamma } { \theta_a (s) ^{\gamma +1} } \| \nabla ( \Psi_N v ) \|_2^2 \, d \tau \geq \int\limits_{s}^{t}  \frac{ \theta_a (\tau)^\gamma } { \theta_a (s) ^{\gamma +1} } \| \nabla ( \Psi_{1}^\xi v ) \|_2^2 \, d \tau  - c_{1}(s,t,a,\xi),
\end{equation}
where 
\begin{equation*}
    c_{1}(s,t,a,\xi) := \int\limits_{s}^{t} \frac{ \theta_a (\tau)^\gamma } { \theta_a (s) ^{\gamma +1} } \Bigg( - \frac{1}{2} \int\limits_{\supp (\nabla \Psi_{1}^\xi )} \Delta (\Psi_{1}^\xi) | v |^2 \, dx + \|  \nabla (\Psi_{1}^{\xi} ) \otimes  v \|_2^2  \Bigg) \, d \tau .
\end{equation*}
Using the fact that
\begin{equation*}
    \nabla \Psi_{1}^\xi = \frac{1}{\theta_a(t)} \nabla \varphi_{1}^\xi \Big( \frac{x}{\theta_a(t)} \Big),
\end{equation*}
we get
\begin{equation*}
    |c_{1}(s,t,a,\xi)| \leq \frac{C}{\xi^2} \int\limits_{s}^{t} \frac{ \theta_a (\tau)^{\gamma-2} } { \theta_a (s) ^{\gamma +1} }  \int\limits_{\supp (\nabla \Psi_{1}^\xi )} | v (x, \tau)|^2 \, dx   \, d \tau ,
\end{equation*}
and then using H\"older's inequality for Lorentz spaces, we obtain
\begin{equation*}
    - c_{1}(s,t, \xi) \geq -C_{a,a,\gamma,\xi }M^2,
\end{equation*}
where $C_{a,\gamma,\xi }\in(0,\infty)$ is a constant depending on $a$, $\gamma$ and $\xi$. 

\subsubsection*{Step 2: a Poincar\'e inequality}
Now, we need to manipulate the quantity $ \| \nabla ( \Psi_{1}^\xi v ) (\cdot, \tau) \|_2^2 $. 
Consider $  \tau \in (  - \frac{1}{16(N+1)^2} , 0 ) $. 
Since 
\begin{equation*}
    \int\limits_{B_0((1+\xi)\theta_a(\tau))} \nabla ( \Psi_{1}^\xi ) \cdot  v =0,
\end{equation*}
we can use a right inverse of the divergence\footnote{Such an operator is sometimes called a Bogovskii operator, see \cite{Galdibook}.} 
to infer that there exists a function 
$$w^\xi (\cdot,\tau)   \in W_0^{1,2} \big( B_0((1+\xi)\theta_a(\tau))\big)$$ 
such that
\begin{equation*}
    \nabla \cdot w^\xi (\cdot,\tau) = \nabla ( \Psi_{1}^\xi ) \cdot v (\cdot,\tau)
\end{equation*}
and
\begin{equation*}
    \| \nabla w^\xi \|_{L^2 \left( B_0((1+\xi)\theta_a(\tau))\right) } \leq C \| \nabla ( \Psi_{1}^\xi ) \cdot v \|_2,
\end{equation*}
where $C\in(0,\infty)$ does not depend on the parameters $\xi,\tau$. 
Hence $ \Psi_{1}^\xi  v - w^{\xi}  $ is divergence free in $B_{0} {((1+\xi)\theta_a(\tau))}$ with zero trace on the boundary. By Poincar\'e's inequality for trace-free divergence-free functions, we have
\begin{equation}\label{e.poincareDS}
    \| \Psi_{1}^\xi  v - w^\xi \|_{L^2 ( B_{0} { ((1+\xi)\theta_a(\tau) )   } ) } \leq \frac{1}{ \sqrt{ \lambda_S (B_0 {((1+\xi)\theta_a(\tau))}) }  } \| \nabla ( \Psi_{1}^\xi  v - w^\xi ) \|_{L^2 ( B_{0} { ((1+\xi)\theta_a(\tau)) }) },
\end{equation}
where $\lambda_S(B_0 {( r   ) })$ is the first eigenvalue of the Dirichlet-Stokes operator on $B_0 (r)$.\footnote{It is well known that $\lambda_S(B_0 (1))$ is greater than $\pi^2$, the principal eigenvalue of the Dirichlet-Laplace operator.} 
By homogeneity we find
\begin{equation}\label{e.scalingpoinca}
    \frac{1}{ \sqrt{ \lambda_S(B_0 ((1+\xi)\theta_a (\tau) )) }  } =  \frac{ (1+ \xi) \theta_a (\tau) }{ \sqrt{ \lambda_S (B_0{(1)}) } }.
\end{equation}
Thus, using Poincar\'e's inequality \eqref{e.poincareDS}, the scaling property \eqref{e.scalingpoinca} and \eqref{consxi} we find 
\begin{align*}
    \|  \Psi_{1}^\xi  v \|_2 &\leq \| \Psi_{1}^\xi  v - w^\xi \|_{L^2 (B_{0} {((1+\xi)\theta_a(\tau)) }) } +  \|  w^\xi \|_2 \\
    &\leq \frac{ (1+ \xi) \theta_a (\tau) }{ \sqrt{ \lambda_S(B_0(1)) } } \| \nabla ( \Psi_{1}^\xi  v - w^\xi ) \|_{L^2 (B_{0} { ((1+\xi)\theta_a(\tau)) }) } + \frac{ (1+ \xi) \theta_a (\tau) }{ \pi }  \|  \nabla w^\xi \|_{L^2 (B_{0} { ( (1+\xi)\theta_a(\tau)) }) } \\
    &\leq \frac{ (1+ \xi) \theta_a (\tau) }{ \sqrt{ \lambda_S(B_0(1)) } } \| \nabla ( \Psi_{1}^\xi  v  ) \|_{L^2 (B_{0} { ( (1+\xi)\theta_a(\tau)) }) } + \frac{ C (1+ \xi) }{ \xi \pi }  \|  v \|_{L^2 (B_{0} { ( (1+\xi)\theta_a(\tau))})\setminus B_{0} { (\theta_a(\tau))})}  . 
\end{align*}
As we want to manipulate the square of $\| \nabla ( \Psi_{1}^\xi  v  ) \|_{L^2 ( B_{0} {(1+\xi)\theta_a(\tau) } ) }$, we introduce a parameter $\kappa >0$ to get
\begin{equation*}
    \|  \Psi_{1}^\xi  v \|_2^2 \leq \frac{ (1+ \xi)^2(1 + \kappa ) \theta_a(\tau)^2 }{ \lambda_S(B_0(1)) }  \| \nabla ( \Psi_{1}^\xi  v  ) \|^2_{L^2 (B_{0} { ((1+\xi)\theta_a(\tau)} ) } + C_{ \xi, \kappa }  \|  v \|_{L^2 (B_{0} { ( (1+\xi)\theta_a(\tau))})\setminus B_{0} { (\theta_a(\tau))})}^2 ,
\end{equation*}
where $ C_{ \xi, \kappa }\in(0,\infty)$ is a constant depending on $\xi$ and $\kappa$.

Substituting this into \eqref{gradpsi1} gives that for $s \in (  - \frac{1}{16(N+1)^2} , 0 ) $ and $ t\in (s,0]$,
\begin{align}
\label{psi1}
     \int\limits_{s}^{t}  \frac{ \theta_a (\tau)^\gamma } { \theta_a (s) ^{\gamma +1} } \| \nabla ( \Psi_N v ) \|_2^2 \, d \tau \geq \frac{\lambda_S(B_0(1))}{(1 + \kappa)(1 + \xi)^2  }  \int\limits_{s}^{t}  \frac{ \theta_a (\tau)^ {\gamma-2} } { \theta_a (s) ^{\gamma +1} } \|  \Psi_{1}^\xi v  \|_2^2 \, d\tau - c_{2}(s,t,a,\xi,\kappa),
\end{align}
where,
\begin{equation*}
    c_{2}(s,t,a,\xi,\kappa) = \frac{C_{ \xi,\kappa }\lambda_S(B_0(1))}{(1 + \kappa)(1 + \xi)^2  }  \int\limits_{s}^{t}  \frac{ \theta_a (\tau)^ {\gamma-2} } { \theta_a (s) ^{\gamma +1} } \| v  \|_{L^2 \left(B_0((1+\xi)\theta_a(\tau))\setminus B_0(\theta_a(\tau))\right) }^2 \, d\tau  + c_{1} (s,t,a,\xi).
\end{equation*}
By the same reasoning as for $c_1$ 
\begin{equation*}
    -c_{2}(s,t,a,\xi, \kappa) \geq -C_{a,\gamma, \xi, \kappa}M^2,
\end{equation*}
where $C_{a,\gamma,\xi, \kappa }\in(0,\infty)$ is a constant depending on $a$, $\gamma$, $\xi$ and $\kappa$. 
Now, we use the identity
\begin{equation*}
    \| \Psi_{1}^\xi v \|_2^2 = \|  \Psi_N v \|_2^2 - 2 \big( \Psi_{N}^\xi v , \Psi_{N,2}^\xi v \big)_{L^2} + \|   \Psi_{N,2}^{\xi}  v \|_2^2
\end{equation*}
and \eqref{psi1} to get
\begin{align}
\label{psi}
     \int\limits_{s}^{t} \frac{ \theta_a (\tau)^ {\gamma} } { \theta_a (s) ^{\gamma +1} } \| \nabla ( \Psi_N v ) \|_2^2 \, d \tau  \geq \frac{\lambda_S(B_0 (1))}{(1 + \kappa)(1 + \xi)^2  }  \int\limits_{s}^{t}  \frac{ \theta_a (\tau)^ {\gamma-2} } { \theta_a (s) ^{\gamma +1} } \|  \Psi_N v  \|_2^2 \, d \tau - c_{3}(s,t,a,\xi,\kappa),
\end{align}
where,
\begin{align*}
    c_{3}(s,t,a,\xi,\kappa) =  \frac{\lambda_S(B_0 (1))}{(1 + \kappa)(1 + \xi)^2  } \int\limits_{s}^{t}   \frac{ \theta_a (\tau)^ {\gamma-2} } { \theta_a (s) ^{\gamma +1} } \left[  2 (\Psi_{N}^\xi v, \Psi_{N,2}^\xi v )_2-\| \Psi_{N,2}^\xi v  \|_{2}^2 \right] d\tau+c_{2} (s,t,a,\xi, \kappa).
\end{align*}
As $\Psi_{N,2}^{\xi} (\cdot, \tau)$  is supported on $B((N+1)\theta_a(\tau))\setminus B(\theta_a(\tau))$, we have by Hunt's inequality
\begin{equation*}
    -c_{3}(s,t,a,\xi, \kappa) \geq - C_{a,\gamma, \xi, \kappa }NM^2,
    \end{equation*}
where the constant $C_{a,\gamma, \xi, \kappa}\in(0,\infty)$ depends on $a$, $\gamma$, $\xi$ and $\kappa$. 
We finally find from \eqref{psi}, for $s \in (  - \frac{1}{16(N+1)^2} , 0 ) $ and $t \in (s,0 ]$,
\begin{equation}
\label{gaingrad}
    \int\limits_{s}^{t}   \frac{ \theta_a (\tau)^ {\gamma} } { \theta_a (s) ^{\gamma +1} } \| \nabla ( \Psi_N v ) \|_2^2 \, d \tau 
    \geq \,  \frac{\lambda_S(B_0(1))}{(1 + \kappa)(1 + \xi)^2  } \, g_\gamma(s,t) - C_{a,\gamma, \xi, \kappa}N M^2,
\end{equation}
where the constant $C_{a,\gamma, \xi, \kappa}\in(0,\infty)$ depends on $a$, $\gamma$, $\xi$ and $\kappa$. 

\subsection{Gronwall estimate}
\label{gronestimate}
With the Poincar\'e-type inequality obtained in \eqref{gaingrad}, the estimate \eqref{ee4} becomes for $s\in(-\frac1{16(N+1)^2},0)$, and $t\in(s,0]$,
\begin{align*}
     \frac{ \theta_a (t)^ {\gamma+1} }{ \theta_a(s) ^{\gamma +1} } f(t) \, + \, & \theta_a(s)^{2} \frac{\partial g_\gamma}{\partial s} (s,t)  + \left( \frac{\lambda_S(B_0(1))}{(1+ \kappa)(1 + \xi )^2} - \frac{a}{2}\big(\frac\gamma 2+1\big) - \frac{C_*M}{N}\right) g_\gamma (s,t)  \nonumber \\
    \leq & C_{a,\gamma}(M^4+SMN^2+M^2N^2)+C_{a,\gamma, \xi, \kappa}N M^2.
\end{align*}
Now, we drop the first term in the left hand side and we consider $t=0$. Let us write
\begin{equation*}
    A := \frac{\lambda_S (B_0(1))}{(1+ \kappa)(1 + \xi )^2} - \frac{a}{2}\Big(\frac\gamma 2+1\Big) - \frac{C_*M}{N}
\end{equation*}
and
\begin{equation*}
    B := C_{a,\gamma}(M^4+SMN^2+M^2N^2)+C_{a,\gamma, \xi, \kappa}N M^2.
\end{equation*}
Thus, we study
\begin{equation*}
     \frac{d}{d s}(g_\gamma(\cdot,0))(s) + \frac{A}{\theta_a(s)^{2}} g_\gamma (s,0) 
    \leq   \frac{B}{\theta_a(s)^{2}}.
\end{equation*}
We multiply this equation by the function 
\begin{equation*}
    k(s) = \left( \frac{\theta_a(s)}{\theta_a(s_0)} \right)^{-\frac{2A}{a}},  
\end{equation*}
where $s_0:=-\frac1{32(N+1)^2}$. The function $k(s)$ satisfies
\begin{equation*}
    \frac{dk}{ds}(s) =  \frac{A}{\theta_a(s)^{2}} \left( \frac{\theta_a(s)}{\theta_a(s_0)} \right)^{-\frac{2A}{a}} =  \frac{A}{\theta_a(s)^{2}} k(s).  
\end{equation*}
Then, we find
\begin{equation*}
    \frac{d}{ds}\left( kg_\gamma(\cdot, 0) \right) (s)
    \leq   \frac{B}{A} \left( \frac{A}{\theta_a(s)^{2}} k(s)   \right).
\end{equation*}
Let $s\in [s_0,0)$. Integrating over $[s_0,s]$ and multiplying by $k^{-1}(s)$ we obtain
\begin{equation}
\label{cfg}
    g_\gamma(s,0)
    \leq g_\gamma(s_0,0) \frac{1}{k(s)} + \frac{B}{A} \left( 1- \frac{1}{k(s)} \right).
\end{equation}
Estimate \eqref{cfg} will be useful in the next section to demonstrate Proposition \ref{prop.scaleinv}. 
We want $A>0$ in order to have a damping effect, which will be achieved by taking $\xi,\, \kappa$ small and $N$ large.

\subsection{Proof of Proposition \ref{prop.scaleinv}}
\label{subsec.scaleinv}

Our objective is to prove the boundedness of $f$. Here we take $\gamma=1$. We rely on the estimate \eqref{cfg}. As $a=\lambda_S(B_0 (1) )$ according to the choice made in Theorem \ref{theom.main},\footnote{Here we see that any $0 < a < \frac43\lambda_S(B_0 (1) )$ works; see Footnote \ref{foot.a}. We remark here that there is a possibility to get the full range $0 < a <4\lambda_S(B_0 (1) )$ as in \cite{Neus14}. Indeed, it suffices to choose different parameters in Young's inequality leading to \eqref{e.controlKII} and \eqref{e.controlKI}. This results in a small parameter $\varepsilon$ in front of the third term in the right hand side of \eqref{e.controlKII} and of the second term in the right hand side of \eqref{e.controlKI}. We do not carry out this technical modification here so as to keep the number of parameters to a minimum.} we first take $ 0 < \kappa, \xi \ll 1$ and then the parameter $N(M,\xi,\kappa)$ (see the definition \eqref{e.eqPsi} of $\Psi_N$) large enough to get 
\begin{equation}\label{e.choiceN}
    A = \frac{\lambda_S (B_0(1))}{(1+ \kappa)(1 + \xi )^2} - \frac{3a}{4} - \frac{C_*M}{N} >0.
\end{equation}
Hence, from \eqref{cfg}, we obtain
$1 / k( s)$ goes to $0$ when $ s \uparrow 0$. 
Thus, we get $g_1(\cdot,0)$ is bounded on $[s_0,0)$ with as defined above $s_0=-\frac1{32(N+1)^2}$.

Now, we want to prove that $f(s)$ is bounded. Take any $t_1\in[\frac{s_0}{2},0)$ and define $ s _1:= 2t_1  $. Thus, $s_1 < t_1 < 0$ and
\begin{equation*}
    2(t_1-s_1) = -s_1 = \frac{\theta_a(s_1)^2}{a}.
\end{equation*}
Moreover, for $\tau \in (s_1, t_1)$ we have
\begin{equation*}
    \frac{1}{\sqrt{2}} \theta_a(s_1) = \theta_a(t_1) < \theta_a(\tau) < \theta_a(s_1) = \sqrt{2}\theta_a (t_1).
\end{equation*}
Then, we get 
\begin{align*}
    \frac{1}{(t_1 -  s_1 )} \int\limits_{ s _1 }^{t_1}  f(\tau) d \tau &\leq \frac{  2a }{ \theta_a( s_1 )^2 } \int\limits_{ s _1 }^{t_1}  f(\tau) d \tau \leq 2a    \int\limits_{ s _1 }^{0}  \frac{1 
    }{ \theta_a(  s_1 ) ^{2}  } f(\tau) d \tau  \nonumber \\
    & =  2 a 
    g_1(  s_1,0 ) .
\end{align*}
We can find $s_1' \in ( s_1, t_1)$ such that
\begin{equation}\label{e.pigeonhole}
    f(s_1') \leq \frac{1}{(t_1 -  s_1 )} \int\limits_{ s_1 }^{t_1}  f(\tau) d \tau \leq 2  a      \sup_{s\in[s_0,0)}  
    g_1(s,0 ).
\end{equation}
Inequality \eqref{ee3m}, with $t=t_1$ and $s=s_1'$, gives
\begin{align*}
    \frac{1}{2
    } \,  f(t_1) \leq & \frac{ \theta_a (t_1)^ {2} }{ \theta_a (s_1 ') ^{2} } f(t_1) \nonumber\\
    \leq &     f(s_1 ') + \frac{C_* M}{N}g_1(s_1 ' ,0)+ C_{a,\gamma}(M^4+SMN^2+M^2N^2)\nonumber\\
    \leq &    2a\sup_{s\in[s_0,0)}g_1(s,0 ) + \frac{C_* M}{N}g_1(s_1 ' ,0)+ C_{a,\gamma}(M^4+SMN^2+M^2N^2),
\end{align*}
where we used \eqref{e.pigeonhole} in the last line. 
Therefore $f$ is bounded on a small non empty interval $[\frac{s_0}{2},0)$, and eventually on $(-1,0)$ by using the boundedness of the energy for $s\in(-1,\frac{s_0}{2})$, i.e.
\begin{equation}\label{e.bddf}
\esssup_{s\in(-1,0)}\frac{1}{ \theta_a(s) } \| \Psi_N v(\cdot,s) \|_2^2<\infty.
\end{equation}
This directly implies \eqref{controldef} which concludes the proof of Proposition \ref{prop.scaleinv}.

\section{Proof of the main results}
\label{mainproofs}

\subsection{Case with axisymmetry}

We prove \eqref{e.blowupL^3infty} in Theorem \ref{theom.main} by contraposition. Hence, we assume by contraposition that
\begin{equation}\label{e.prooftheommainL3infty}
\esssup_{s\in(-1,0)}\|v(\cdot,s)\|_{L^{3,\infty}(B_0(\sqrt{a})\setminus B_0(\sqrt{a(-s)}))}\leq M.
\end{equation}
There are two steps in this proof. Note that we only assume axisymmetry at the end of the second step.

\subsubsection*{Step 1: control of a scale-invariant Morrey-type quantity}
This step works without the assumption of axisymmetry. We start form the control \eqref{e.bddf}. Observe that for $\gamma > -1$,
\begin{equation}
\label{hypforene}
    \esssup_{s\in (-1,0)}  \int\limits_{s}^{0} \frac{ \theta_a(\tau)^{ \gamma - 2 } } { \theta_a(s) ^{\gamma +1} }   \| \Psi_N v (\tau) \|_{L^2 } ^{2} \, d \tau  < +\infty.
\end{equation}
By \eqref{hypforene} we can now repeat the computations that lead to 
the estimate \eqref{ee3} with $\gamma = 0$. Hence, we have \eqref{ee3} with $\gamma = 0$ and $t=0$, i.e.
\begin{align*}
    &\esssup_{s \in ( -\frac{1}{16(N+1)^2} ,0 ) } \frac{1} { \theta_a (s) }   \int\limits_{s}^{0} \| \nabla ( \Psi_N v ) \|_2^2\\
    &\leq \esssup_{s \in ( -\frac{1}{16(N+1)^2} ,0 )}\Bigg(\frac{1}{\theta_a(s) } \| \Psi_N v (s) \|_2^{2} +  \frac{C_*M}{N} \int\limits_{s}^{0}  \frac{ \theta_a(\tau)^{- 2 }  }{ \theta_a(s)} \| \Psi_N v \|_2^{2}\, d\tau\Bigg)
     \nonumber \\
      &\qquad +C_{a,\gamma}(M^4+SMN^2+M^2N^2).
      \end{align*}
Therefore,
\begin{equation*}
    \esssup_{s \in (  -\frac{1}{16(N+1)^2} ,0 ) }  \frac{1}{\theta_a(s)}\| { \Psi_N v} (s) \|^2_{L^2} +  \frac{ 1} { \theta_a (s) }  \int\limits_{s}^{0}   \| \nabla ( \Psi_N v ) \|_2^2 \, d \tau  < + \infty.
\end{equation*}
By interpolation between $L^2$ and $L^6$, this implies that
\begin{equation}
\label{key1}
    \esssup_{s \in (  -\frac{1}{16(N+1)^2} ,0 ) } \frac{ 1}{\theta_a(s)^2} \int\limits_{s}^0 \int\limits_{B_0 ( {\sqrt{a}} ) } | \Psi_N { v }(x,\tau) |^3    \, dx \, d\tau < + \infty.
\end{equation}
Therefore,
\begin{equation}
\label{key2}
    \esssup_{s \in (  -\frac{1}{16(N+1)^2} ,0 ) } \frac{ 1}{\theta_a(s)^2} \int\limits_{s}^0 \int\limits_{B_0 (\theta_a(\tau))} | v(x,\tau) |^3    \, dx \, d\tau < + \infty.
\end{equation}
Moreover, the assumption \eqref{e.prooftheommainL3infty} implies that for $\delta\in(0,\frac15)$
\begin{equation}
\label{key3}
    \esssup_{s \in (  -\frac{1}{16(N+1)^2} ,0 )} \frac{ 1}{\theta_a(s)^{5(1-\frac{1+\delta}{2})}}\int\limits_{s}^0 \int\limits_{B_0 (\theta_a(s))\setminus B_0 (\theta_a(\tau))} | v(x,\tau) |^{\frac52(1+\delta)}    \, dx \, d\tau < + \infty.
\end{equation}
Combining \eqref{key2} and \eqref{key3} yields
\begin{equation}
\label{key4}
    \esssup_{r \in (0,\frac{1}{4(N+1)})} \frac{ 1}{r^{5(1-\frac{1+\delta}{2})}} \int\limits_{-r^2}^0 \int\limits_{B_0 (r)} | v |^{\frac52(1+\delta)}    \, dx \, d\tau < + \infty.
\end{equation}

\subsubsection*{Step 2: conclusion}
By \cite[Theorem 6]{Seregin2018} (see also \cite{SerZajac}), we have that \eqref{key4} for $0<\delta\ll 1$ implies the boundedness of 
\begin{multline}\label{e.typeI}
\esssup_{r \in (0,\frac{1}{4(N+1)})}\Bigg(\esssup_{s \in (  -r^2,0 )} \frac{ 1}{r}\int\limits_{B_0 (r)} | v(x,\tau) |^2    \, dx \, d\tau+\frac{ 1}{r^2} \int\limits_{-r^2}^0 \int\limits_{B_0 (r)} | v |^{3}    \, dx \, d\tau\\
+\frac{ 1}{r} \int\limits_{-r^2}^0 \int\limits_{B_0 (r)} |\nabla v|^{2}\, dx \, d\tau\Bigg)  < + \infty.
\end{multline}
Let us now assume that $v$ is axisymmetric. Then \eqref{e.typeI} and \cite[Theorem 2.1]{seregin2020local} imply that $(0,0)$ is a regular point.

\subsection{General case without axisymmetry}
\label{sec.L3final}

We prove \eqref{e.blowupL^3} in Theorem \ref{theom.main} by contraposition. Hence, we assume by contraposition that
\begin{equation}\label{e.prooftheommainL3bis}
\esssup_{s\in(-1,0)}\|v(\cdot,s)\|_{L^{3}(B_0(\sqrt{a})\setminus B_0(\sqrt{a(-s)}))}\leq M.
\end{equation}
There are two steps in this proof. 

\subsubsection*{Step 1: control of a scale-invariant Morrey-type quantity}
Note that \eqref{e.prooftheommainL3bis} implies \eqref{e.prooftheommainL3infty}. Hence estimate \eqref{key1} holds. 
Combining \eqref{key1} with \eqref{e.prooftheommainL3bis} gives
\begin{equation}
\label{hyppr}
    \esssup_{s \in (  -\frac{1}{16(N+1)^2} ,0 ) }  \frac{ 1}{\theta_a(s)^2} \int\limits_{s}^0 \int\limits_{B_0 ( \sqrt{a})  } | { v }  (x,\tau )|^3   \, dx \, d\tau < +\infty .
\end{equation}

\subsubsection*{Step 2: conclusion}
From \eqref{hyppr} and a pigeonholing argument, we find a sequence of times $t_k\in(-\frac{1}{16(N+1)^2} ,0 )$ such that $t_k\uparrow 0$ and 
$$
\sup_k\int\limits_{B_0 ( \sqrt{a})  } |v(x,t_k )|^3  \, dx<\infty.
$$
Applying \cite[Theorem 1.1]{AlbrittonBarker2018local} immediately yields that $(0,0)$ is a regular point.

\appendix

\section{Known results about Lorentz spaces}
\label{sec.lorentz}

Given a measurable subset $\Omega\subseteq\mathbb{R}^{d}$, let us define the Lorentz spaces. 
For a measurable function $f:\Omega\rightarrow\mathbb{R}$ define:
\begin{equation}\label{defdistchapter2}
d_{f,\Omega}(\alpha):=\mu(\{x\in \Omega : |f(x)|>\alpha\}),
\end{equation}
where $\mu$ denotes the Lebesgue measure.
 The Lorentz space $L^{p,q}(\Omega)$, with $p\in [1,\infty)$, $q\in [1,\infty]$, is the set of all measurable functions $g$ on $\Omega$ such that the quasinorm $\|g\|_{L^{p,q}(\Omega)}$ is finite. Here:

\begin{equation}\label{Lorentznormchapter2}
\|g\|_{L^{p,q}(\Omega)}:= \Big(p\int\limits_{0}^{\infty}\alpha^{q}d_{g,\Omega}(\alpha)^{\frac{q}{p}}\frac{d\alpha}{\alpha}\Big)^{\frac{1}{q}},
\end{equation}
\begin{equation}\label{Lorentznorminftychapter2}
\|g\|_{L^{p,\infty}(\Omega)}:= \sup_{\alpha>0}\alpha d_{g,\Omega}(\alpha)^{\frac{1}{p}}.
\end{equation}\\
It is known there exists a norm, which is equivalent to the quasinorm defined above, for which $L^{p,q}(\Omega)$ is a Banach space. 
For $p\in [1,\infty)$ and $1\leq q_{1}< q_{2}\leq \infty$, we have the following continuous embeddings 
\begin{equation}\label{Lorentzcontinuousembeddingchapter2}
L^{p,q_1}(\Omega) \hookrightarrow  L^{p,q_2}(\Omega)
\end{equation}
and the inclusion is known to be strict.

Our main tool in this paper is the following H\"older's inequality for Lorentz spaces. The statement below and proof can be found in Hunt's paper \cite[Theorem 4.5, p.271]{hunt}; see
also \cite[Theorems 3.4-3.5, page 141]{O'Neil}.
\begin{Proposition}\label{hunt}
Suppose that $1\leq p,q,r\leq\infty$ and $1\leq s_1,s_2\leq\infty$. Furthermore, suppose that $p$, $q$, $r$, $s_1$ and $s_2$ satisfy the following relations: $$\frac{1}{p}+\frac{1}{q}=\frac{1}{r}$$ and $$\frac{1}{s_1}+\frac{1}{s_2}=\frac{1}{s}.$$ 
Then the assumption that $f\in L^{p,s_1}(\Omega)$ and $g\in L^{q,s_2}(\Omega)$ imply that $fg \in L^{r,s}(\Omega)$, with the estimate 
\begin{equation}\label{Holderverygeneral}
\|fg\|_{L^{r,s}(\Omega)}\leq C(p,q,s_1,s_2)\|f\|_{L^{p,s_1}(\Omega)}\|g\|_{L^{q,s_2}(\Omega)}.
\end{equation}
\end{Proposition}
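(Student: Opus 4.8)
The plan is to reduce the statement to the classical Hölder inequality on the half-line $(0,\infty)$ equipped with the scaling-invariant measure $\tfrac{dt}{t}$, working throughout with decreasing rearrangements. Write $f^{*}(t):=\inf\{\alpha>0:\ d_{f,\Omega}(\alpha)\le t\}$ for the decreasing rearrangement of $f$, and similarly $g^{*}$. The first step is to record the identity
\begin{equation*}
\|g\|_{L^{p,q}(\Omega)}=\Big(\int_{0}^{\infty}\big(t^{1/p}g^{*}(t)\big)^{q}\,\frac{dt}{t}\Big)^{1/q},\qquad
\|g\|_{L^{p,\infty}(\Omega)}=\sup_{t>0}t^{1/p}g^{*}(t),
\end{equation*}
between the distribution-function quasinorm \eqref{Lorentznormchapter2}--\eqref{Lorentznorminftychapter2} and the rearrangement quasinorm. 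For $q<\infty$ this follows from the distributional identity $d_{g^{*}}=d_{g,\Omega}$ together with a layer-cake computation: expanding $g^{*}(t)^{q}=q\int_{0}^{\infty}\beta^{q-1}\mathbf{1}_{\{\beta<g^{*}(t)\}}\,d\beta$ and applying Fubini yields $\int_{0}^{\infty}t^{q/p-1}g^{*}(t)^{q}\,dt=p\int_{0}^{\infty}\beta^{q-1}d_{g,\Omega}(\beta)^{q/p}\,d\beta$, which is precisely $\|g\|_{L^{p,q}(\Omega)}^{q}$ as defined in \eqref{Lorentznormchapter2} (this is exactly why the normalizing factor $p$ appears there). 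So it suffices to prove \eqref{Holderverygeneral} for the rearrangement quasinorms.

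The second step is the elementary submultiplicativity of rearrangements: for all $t_{1},t_{2}>0$,
\begin{equation*}
(fg)^{*}(t_{1}+t_{2})\le f^{*}(t_{1})\,g^{*}(t_{2}).
\end{equation*}
Indeed $\{|f|>f^{*}(t_{1})\}$ has measure at most $t_{1}$ and $\{|g|>g^{*}(t_{2})\}$ has measure at most $t_{2}$ (using right-continuity of the distribution functions), so off the union of these two sets $|fg|\le f^{*}(t_{1})g^{*}(t_{2})$, hence $d_{fg,\Omega}(f^{*}(t_{1})g^{*}(t_{2}))\le t_{1}+t_{2}$ and the inequality follows from the definition of $(fg)^{*}$. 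Taking $t_{1}=t_{2}=t/2$ and using $\tfrac1r=\tfrac1p+\tfrac1q$ gives
\begin{equation*}
t^{1/r}(fg)^{*}(t)\le \big(t^{1/p}f^{*}(t/2)\big)\,\big(t^{1/q}g^{*}(t/2)\big).
\end{equation*}
I would then apply the classical Hölder inequality on $\big((0,\infty),\tfrac{dt}{t}\big)$ with exponents $s_{1},s_{2}$, $\tfrac1{s_{1}}+\tfrac1{s_{2}}=\tfrac1s$ (when $s_{1}$ or $s_{2}$ equals $\infty$, replacing the corresponding integral by an essential supremum), to bound the $L^{s}(\tfrac{dt}{t})$ quasinorm of the left side by the product of the $L^{s_{1}}(\tfrac{dt}{t})$ quasinorm of $t^{1/p}f^{*}(t/2)$ and the $L^{s_{2}}(\tfrac{dt}{t})$ quasinorm of $t^{1/q}g^{*}(t/2)$. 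Finally, the substitution $t\mapsto 2t$ (which preserves $\tfrac{dt}{t}$) turns the first factor into $2^{1/p}\|f\|_{L^{p,s_{1}}(\Omega)}$ and the second into $2^{1/q}\|g\|_{L^{q,s_{2}}(\Omega)}$, so that $\|fg\|_{L^{r,s}(\Omega)}\le 2^{1/p+1/q}\|f\|_{L^{p,s_{1}}(\Omega)}\|g\|_{L^{q,s_{2}}(\Omega)}=2^{1/r}\|f\|_{L^{p,s_{1}}(\Omega)}\|g\|_{L^{q,s_{2}}(\Omega)}$, which is \eqref{Holderverygeneral} with $C=2^{1/r}$ for $r<\infty$; the degenerate case $r=\infty$ (which forces $p=q=\infty$) reduces to the ordinary $L^{\infty}$ Hölder inequality.

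The change-of-variables steps and the distributional bookkeeping are routine; the one point deserving genuine care — and where I expect the main friction to lie — is the first step, namely the exact match between the distribution-function quasinorm and the rearrangement quasinorm, where one must respect the convention for $f^{*}$ at jump points and the identity $d_{f^{*}}=d_{f,\Omega}$. An alternative, less hands-on route would be to identify $L^{p,s_{1}}$ and $L^{q,s_{2}}$ as real interpolation spaces between Lebesgue spaces and deduce the estimate from the behaviour of the bilinear product map under real interpolation; but the rearrangement argument above is self-contained and even furnishes an explicit constant.
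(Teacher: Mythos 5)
Your proof is correct, and it is worth noting that the paper itself offers no argument for this proposition: it is quoted verbatim from Hunt \cite[Theorem 4.5]{hunt} and O'Neil, so there is no in-paper proof to compare against. Your rearrangement argument is essentially the classical route behind those references: the identity $\|g\|_{L^{p,q}}^{q}=\int_{0}^{\infty}\big(t^{1/p}g^{*}(t)\big)^{q}\,\frac{dt}{t}$ (your layer-cake computation does reproduce exactly the normalizing factor $p$ in \eqref{Lorentznormchapter2}, and the $q=\infty$ case is the standard $\sup_{t}t^{1/p}g^{*}(t)=\sup_{\alpha}\alpha\,d_{g,\Omega}(\alpha)^{1/p}$), the submultiplicativity $(fg)^{*}(t_{1}+t_{2})\le f^{*}(t_{1})g^{*}(t_{2})$ (your right-continuity argument for $d_{f,\Omega}(f^{*}(t_{1}))\le t_{1}$ is the correct justification), and then the generalized H\"older inequality on $\big((0,\infty),\frac{dt}{t}\big)$ with exponents $\frac{1}{s_{1}}+\frac{1}{s_{2}}=\frac{1}{s}$, followed by the dilation $t\mapsto 2t$; this even yields the explicit constant $2^{1/r}$, which the citation does not provide. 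The only loose ends are edge cases that the paper's own definitions leave outside the framework anyway: the definition \eqref{Lorentznormchapter2}--\eqref{Lorentznorminftychapter2} is stated only for $p\in[1,\infty)$, so the cases $p=\infty$ or $q=\infty$ (with the other index finite) require a convention such as $L^{\infty,\infty}=L^{\infty}$ and $f^{*}\le\|f\|_{\infty}$, and you only explicitly dispose of $r=\infty$; since every application in the paper uses finite first indices (e.g.\ $L^{3,\infty}\cdot L^{\frac32,1}\to L^{1,1}$ and $L^{3,\infty}\cdot L^{3,\infty}\to L^{\frac32,\infty}$), this does not affect anything used here.
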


\section*{Conflict of interest}
The authors declare that they have no conflict of interest.

\section*{Data availability statement}

Data sharing is not applicable to this article as no datasets were generated or analyzed during the current study.

\section*{Acknowledgment}
CP and PFD are partially supported by the Agence Nationale de la Recherche,
project BORDS, grant ANR-16-CE40-0027-01. CP is also partially supported by the Agence Nationale de la Recherche,
 project SINGFLOWS, grant ANR-
18-CE40-0027-01, project CRISIS, grant ANR-20-CE40-0020-01, by the CY
Initiative of Excellence, project CYNA (CY Nonlinear Analysis) and project CYFI (CYngular Fluids and Interfaces). PFD is also supported by the Labex MME-DII. TB and CP thank the Institute of Advanced Studies of Cergy Paris University for their hospitality. 

\small 
\bibliographystyle{abbrv}
\bibliography{pedro.bib}

\begin{thebibliography}{10}

\bibitem{AlbrittonBarker2018local}
D.~Albritton and T.~Barker.
\newblock Localised necessary conditions for singularity formation in the
  {N}avier-{S}tokes equations with curved boundary.
\newblock {\em J. Differential Equations}, 269(9):7529--7573, 2020.

\bibitem{TB22}
T.~{Barker}.
\newblock {Localized quantitative estimates and potential blow-up rates for the
  {N}avier-{S}tokes equations}.
\newblock {\em arXiv e-prints}, page arXiv:2209.15627, Sept. 2022.

\bibitem{BP20norm}
T.~Barker and C.~Prange.
\newblock Localized {S}moothing for the {N}avier-{S}tokes {E}quations and
  {C}oncentration of {C}ritical {N}orms {N}ear {S}ingularities.
\newblock {\em Arch. Ration. Mech. Anal.}, 236(3):1487--1541, 2020.

\bibitem{BP21cmp}
T.~Barker and C.~Prange.
\newblock Quantitative regularity for the {N}avier-{S}tokes equations via
  spatial concentration.
\newblock {\em Comm. Math. Phys.}, 385(2):717--792, 2021.

\bibitem{BP22survey}
T.~{Barker} and C.~{Prange}.
\newblock {From concentration to quantitative regularity: a short survey of
  recent developments for the Navier-Stokes equations}.
\newblock {\em arXiv e-prints}, page arXiv:2211.16215, Nov. 2022.

\bibitem{BL76}
J.~Bergh and J.~L\"{o}fstr\"{o}m.
\newblock {\em Interpolation spaces. {A}n introduction}.
\newblock Grundlehren der Mathematischen Wissenschaften, No. 223.
  Springer-Verlag, Berlin-New York, 1976.

\bibitem{BG17}
Z.~Bradshaw and Z.~Gruji\'{c}.
\newblock Frequency localized regularity criteria for the 3{D}
  {N}avier-{S}tokes equations.
\newblock {\em Arch. Ration. Mech. Anal.}, 224(1):125--133, 2017.

\bibitem{bradshaw2022local}
Z.~Bradshaw and T.-P. Tsai.
\newblock On the local pressure expansion for the {N}avier-{S}tokes equations.
\newblock {\em Journal of Mathematical Fluid Mechanics}, 24(1):1--32, 2022.

\bibitem{CKN82}
L.~Caffarelli, R.~Kohn, and L.~Nirenberg.
\newblock Partial regularity of suitable weak solutions of the
  {N}avier-{S}tokes equations.
\newblock {\em Comm. Pure Appl. Math.}, 35(6):771--831, 1982.

\bibitem{chen2009lower}
C.-C. Chen, R.~M. Strain, T.-P. Tsai, and H.-T. Yau.
\newblock Lower bounds on the blow-up rate of the axisymmetric
  {N}avier-{S}tokes equations {II}.
\newblock {\em Communications in Partial Differential Equations},
  34(3):203--232, 2009.

\bibitem{chen2008lower}
C.-C. Chen, R.~M. Strain, H.-T. Yau, and T.-P. Tsai.
\newblock Lower bound on the blow-up rate of the axisymmetric {N}avier-{S}tokes
  equations.
\newblock {\em International Mathematics Research Notices}, 2008, 2008.

\bibitem{ChesDai19}
A.~Cheskidov and M.~Dai.
\newblock Kolmogorov's dissipation number and the number of degrees of freedom
  for the 3d {Navier}-{Stokes} equations.
\newblock {\em Proc. R. Soc. Edinb., Sect. A, Math.}, 149(2):429--446, 2019.

\bibitem{CS14}
A.~Cheskidov and R.~Shvydkoy.
\newblock A unified approach to regularity problems for the 3{D}
  {N}avier-{S}tokes and {E}uler equations: the use of {K}olmogorov's
  dissipation range.
\newblock {\em J. Math. Fluid Mech.}, 16(2):263--273, 2014.

\bibitem{ESS2003}
L.~Escauriaza, G.~A. Seregin, and V.~\v{S}ver\'{a}k.
\newblock {$L_{3,\infty}$}-solutions of {N}avier-{S}tokes equations and
  backward uniqueness.
\newblock {\em Uspekhi Mat. Nauk}, 58(2(350)):3--44, 2003.

\bibitem{Galdibook}
G.~P. Galdi.
\newblock {\em An introduction to the mathematical theory of the
  {N}avier-{S}tokes equations}.
\newblock Springer Monographs in Mathematics. Springer, New York, second
  edition, 2011.
\newblock Steady-state problems.

\bibitem{GrujicXu2019-dynrestr}
Z.~Gruji\'{c} and L.~Xu.
\newblock A regularity criterion for {3D} {NSE} in dynamicaly restricted local
  {M}orrey spaces.
\newblock {\em Applicable Analysis}, 0(0):1--15, 2021.

\bibitem{hunt}
R.~A. Hunt.
\newblock On {$L(p,q)$} spaces.
\newblock {\em Enseignement Math. (2)}, 12:249--276, 1966.

\bibitem{KangMiuraTsai20-concL2}
K.~Kang, H.~Miura, and T.-P. Tsai.
\newblock Regular sets and an {$\epsilon$}-regularity theorem in terms of
  initial data for the {N}avier-{S}tokes equations.
\newblock {\em Pure Appl. Anal.}, 3(3):567--594, 2021.

\bibitem{koch2009liouville}
G.~Koch, N.~Nadirashvili, G.~Seregin, and V.~{\v{S}}ver{\'a}k.
\newblock Liouville theorems for the {N}avier-{S}tokes equations and
  applications.
\newblock {\em Acta Mathematica}, 203(1):83--105, 2009.

\bibitem{lei2011liouville}
Z.~Lei and Q.~S. Zhang.
\newblock A {L}iouville theorem for the axially-symmetric {N}avier--{S}tokes
  equations.
\newblock {\em Journal of Functional Analysis}, 261(8):2323--2345, 2011.

\bibitem{LOW18}
K.~Li, T.~Ozawa, and B.~Wang.
\newblock Dynamical behavior for the solutions of the {N}avier-{S}tokes
  equation.
\newblock {\em Commun. Pure Appl. Anal.}, 17(4):1511--1560, 2018.

\bibitem{Luo19}
X.~Luo.
\newblock A {B}eale-{K}ato-{M}ajda criterion with optimal frequency and
  temporal localization.
\newblock {\em J. Math. Fluid Mech.}, 21(1):Art. 1, 16, 2019.

\bibitem{Neus12}
J.~Neustupa.
\newblock A removable singularity in a suitable weak solution to the
  {N}avier-{S}tokes equations.
\newblock {\em Nonlinearity}, 25(6):1695--1708, 2012.

\bibitem{Neus14}
J.~Neustupa.
\newblock A refinement of the local {S}errin-type regularity criterion for a
  suitable weak solution to the {N}avier-{S}tokes equations.
\newblock {\em Arch. Ration. Mech. Anal.}, 214(2):525--544, 2014.

\bibitem{NN02}
J.~Ne\v{c}as and J.~Neustupa.
\newblock New conditions for local regularity of a suitable weak solution to
  the {N}avier-{S}tokes equation.
\newblock {\em J. Math. Fluid Mech.}, 4(3):237--256, 2002.

\bibitem{O'Neil}
R.~O'Neil.
\newblock Convolution operators and ${L}(p,q)$ spaces.
\newblock {\em Duke Math. J.}, 30(1):129--142, 03 1963.

\bibitem{OP22}
W.~S. {O{\.z}a{\'n}ski} and S.~{Palasek}.
\newblock {Quantitative control of solutions to axisymmetric {N}avier-{S}tokes
  equations in terms of the weak ${L}^3$ norm}.
\newblock {\em arXiv e-prints}, page arXiv:2210.10030, Oct. 2022.

\bibitem{seregin2020local}
G.~Seregin.
\newblock Local regularity of axisymmetric solutions to the {N}avier--{S}tokes
  equations.
\newblock {\em Analysis and Mathematical Physics}, 10(4):46, 2020.

\bibitem{sersve2009type}
G.~Seregin and V.~{\v{S}}ver{\'a}k.
\newblock On {T}ype {I} singularities of the local axi-symmetric solutions of
  the {N}avier-{S}tokes equations.
\newblock {\em Communications in Partial Differential Equations},
  34(2):171--201, 2009.

\bibitem{Seregin2018}
G.~Seregin and V.~{\v{S}}ver{\'a}k.
\newblock Regularity criteria for {N}avier-{S}tokes solutions.
\newblock In Y.~Giga and A.~Novotn{\'y}, editors, {\em Handbook of Mathematical
  Analysis in Mechanics of Viscous Fluids}, pages 829--867. Springer
  International Publishing, Cham, 2018.

\bibitem{SerZajac}
G.~A. Seregin and W.~Zajaczkowski.
\newblock A sufficient condition of local regularity for the {N}avier-{S}tokes
  equations.
\newblock {\em Zap. Nauchn. Sem. S.-Peterburg. Otdel. Mat. Inst. Steklov.
  (POMI)}, 336(Kraev. Zadachi Mat. Fiz. i Smezh. Vopr. Teor. Funkts.
  37):46--54, 274, 2006.

\bibitem{Tao19}
T.~Tao.
\newblock Quantitative bounds for critically bounded solutions to the
  {N}avier-{S}tokes equations.
\newblock In A.~Kechris, N.~Makarov, D.~Ramakrishnan, and X.~Zhu, editors, {\em
  Nine Mathematical Challenges: An Elucidation}, volume 104. American
  Mathematical Society, 2021.

\end{thebibliography}

\end{document}